\theoremstyle{plain}                    
\newtheorem{teo}{Theorem}[section]
\newtheorem{prop}[teo]{Proposition}    
\newtheorem{cor}[teo]{Corollary}       
\newtheorem{lem}[teo]{Lemma} 
\theoremstyle{definition}               
\newtheorem{defin}{Definition}
\theoremstyle{remark}        
\newtheorem{rk}{Remark}[section]
\title{Stable cohomology of the moduli space of trigonal curves}
\author{Angelina Zheng\thanks{University of Pavia, Department of Mathematics
	``Felice Casorati", Pavia, Italy}}
\date{}
\begin{document}
\maketitle	
\begin{abstract}We prove that the rational cohomology $H^i(\mathcal{T}_g;\mathbf{Q})$ of the moduli space of trigonal curves of genus $g$ is independent of $g$ in degree $i<\lfloor g/4\rfloor.$ This makes possible to define the stable cohomology ring as $H^\bullet(\mathcal{T}_g;\mathbf{Q})$ for a sufficiently large $g.$  We also compute the stable cohomology ring, which turns out to be isomorphic to the tautological ring. This is done by studying the embedding of trigonal curves in Hirzebruch surfaces and using Gorinov-Vassiliev's method.
\end{abstract}
\section{Introduction and results}
Let us denote by $\mathcal{M}_g$ the moduli space of smooth complex projective curves of genus $g.$ 
The dependence on $g$ of the rational cohomology of $\mathcal{M}_g$ has been an active topic of research for a long time and it is now known, due to the work of Harer \cite{Har85}, and later of Ivanov \cite{Iva89} and Boldsen \cite{Bol12}, that the cohomology ring $H^i(\mathcal{M}_g;\mathbf{Q})$ is independent of the genus $g$ in the range $2g\geq 3i+2.$
Mumford also conjectured in \cite{Mum83} that the stable cohomology ring is generated by tautological classes and this conjecture was proved by Madsen and Weiss in \cite{MW07} using topological techniques.

In this work we investigate the behaviour of the rational cohomology ring of the moduli space $\mathcal{T}_g$ of complex trigonal curves of genus $g,$ i.e. the locus of smooth non-hyperelliptic curves admitting a $g^1_3$ inside the moduli space of curves. Our main result is that the rational cohomology ring of $\mathcal{T}_g$ behaves similarly to that of $\mathcal{M}_g$. We will compute its stable range and its stable cohomology ring, and prove that it is generated by tautological classes as well.

The moduli space $\mathcal{T}_g$ of trigonal curves is also strictly related to the Hurwitz scheme $\mathcal{H}_{3,g}$, parametrizing pairs $(C,\alpha)$, consisting of a smooth curve $C$ of genus $g$ and a degree 3 cover $\alpha:C\rightarrow\mathbf{P}^1$, up to isomorphism. In fact, for $g\geq 5,$ we have $\mathcal{H}_{3,g}\cong\mathcal{T}_g$ \cite[III.B-3.(i)]{ACGH85}.
Thus, we can think of $\mathcal{T}_g$ as a moduli space of covers of $\mathbf{P}^1$.

The topology of moduli spaces of covers of $\mathbf{P}^1$ has attracted great interest in the last few decades, and the stabilization of their cohomology has been extensively studied not only in algebraic geometry but also in arithmetic geometry and number theory. 
Specifically, Ellenberg, Vankatesh and Westerland \cite{EVW15} proved the stabilization for the homology of Hurwitz spaces, which are moduli spaces of $G\mbox{-}$covers of $\mathbf{P}^1$, i.e. Galois covers $\alpha:C\rightarrow\mathbf{P}^1$ with $G\cong Aut(\alpha)$, see \cite{RW06}. This result was motivated by the \emph{Cohen-Lenstra heuristics}, which they analyzed from the study of the asymptotic behavior of the number of points of Hurwitz schemes over a finite field $\mathbf{F}_q$ with $q$ points. 

Our description of the stable rational cohomology of $\mathcal{T}_g$ will be obtained by studying the loci of trigonal curves lying on each Hirzebruch surface, hence each stratum in the \emph{Maroni stratification}, \cite{Mar46}. We will use Gorinov-Vassiliev's method \cite{Vas}, \cite{Gor}, \cite{Tom05}, which reduces the computation of the cohomology of complements of discriminants to the study of a simplicial resolution of the discriminant, based on a classification of the singular loci of its elements. In particular we won't consider the whole classification, but only the families of singular configurations having low codimension in the vector space in which the discriminant is defined.

Our starting point will be the approach in \cite{Zhe}, where we computed the rational cohomology of the moduli space of trigonal curves of genus 5. For $g=5$, in fact, all trigonal curves lie on the first Hirzebruch surface $\mathbb{F}_1$ as smooth divisors. However, $\mathbb{F}_1$ and the other $\mathbb{F}_n$'s contain other trigonal curves of higher genera. For higher values of $g,$ the classification of the singular loci of such curves is more complicated, but we will see that the families of singular configurations we will consider have a description which is analogous to the one we had for $g=5.$ This will allow us to compute the cohomology of trigonal curves lying on $\mathbb{F}_n$ in a certain range.

We will exhibit the procedure for any Hirzebruch surface of degree $n\geq0$ in order to compute the stable cohomology of the locus of trigonal curves lying on it, defined as $N_n:=\{\left[C\right]\in\mathcal{T}_g|\, C\text{ has Maroni invariant } n\}.$  The result is described in the following proposition.
\begin{prop}\label{propMaroniStrata}
	The rational cohomology of $N_n$, for $n\geq1,$ in degree $i\leq\left\lfloor\frac{g-3n+2}{4}\right\rfloor,$ is 
		\begin{equation}\label{stratum}
			H^i(N_n;\mathbf{Q})=\begin{cases}
				\mathbf{Q},&i=0,\\
				\mathbf{Q}(-1)&i=2,\\
				\mathbf{Q}(-3)&i=5,\\
				\mathbf{Q}(-4)&i=7,\\
				0&\text{otherwise};
			\end{cases}
		\end{equation}
		the rational cohomology of $N_0$, in degree $i\leq\left\lfloor\frac{g+2}{4}\right\rfloor$, is 
		\begin{equation}\label{stratum0}
			{H}^i(N_0;\mathbf{Q})=\begin{cases}
				\mathbf{Q},&i=0,\\
				\mathbf{Q}(-3),&i=5,\\
				0,&\text{otherwise};
			\end{cases}
		\end{equation}
	where $\mathbf{Q}(-k)$ denotes the Tate Hodge structure of weight $2k$.
\end{prop}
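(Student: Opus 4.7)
The plan is to realize $N_n$ as an orbifold quotient of the complement of a discriminant in an affine space and to compute the cohomology of that complement by Gorinov--Vassiliev's method. A smooth trigonal curve of Maroni invariant $n$ on $\mathbb{F}_n$ is cut out by a section of $\mathcal{L}_{n,g}=\mathcal{O}(3\sigma_\infty+bF)$, with $b$ determined by $g$ and $n$ via adjunction, uniquely up to a scalar, and two such divisors define the same point of $N_n$ precisely when they are related by the action of $G_n:=\mathrm{Aut}(\mathbb{F}_n)$. Setting $V_{n,g}:=H^0(\mathbb{F}_n,\mathcal{L}_{n,g})$ and writing $\Sigma\subset V_{n,g}$ for the discriminant of singular sections, one identifies $N_n$ with the stack quotient of $V_{n,g}\setminus\Sigma$ by the joint action of $G_n$ and of the scaling $\mathbf{G}_m$. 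The proof then splits into two stages: compute $H^\bullet(V_{n,g}\setminus\Sigma;\mathbf{Q})$ in the stable range, and then pass to the quotient.

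For the first stage one constructs the simplicial resolution of $\Sigma$ whose strata are indexed by the combinatorial type of the singular locus of a section (number and multiplicities of singular points and their position on $\mathbb{F}_n$, in particular with respect to the ruling). Alexander duality inside $V_{n,g}$ recovers $H^\bullet(V_{n,g}\setminus\Sigma;\mathbf{Q})$ from the Borel--Moore homology of the resolution, computed stratum by stratum via the associated spectral sequence. Exactly as in \cite{Zhe} for the case $g=5$ on $\mathbb{F}_1$, singular configurations of small cardinality produce contributions in bidegrees independent of $g$, while configurations of large cardinality contribute only in degrees above the target bound $i\leq\lfloor(g-3n+2)/4\rfloor$ (respectively $\lfloor(g+2)/4\rfloor$ for $n=0$); only a finite, $g$-independent portion of the Gorinov--Vassiliev classification is therefore relevant, to be worked out anew on each $\mathbb{F}_n$.

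For the second stage one exploits the Borel-construction spectral sequence for the $G_n$-action (after factoring out the scaling $\mathbf{G}_m$), with $E_2=H^\bullet(BG_n;\mathbf{Q})\otimes H^\bullet(\mathbf{P}(V_{n,g})\setminus\mathbf{P}\Sigma;\mathbf{Q})$. For $n\geq 1$ the reductive part of $G_n$ is $\mathbf{G}_m\times\mathrm{PGL}_2$, so $H^\bullet(BG_n;\mathbf{Q})\cong\mathbf{Q}[y_2,y_4]$ with $y_2\in H^2\cong\mathbf{Q}(-1)$ and $y_4\in H^4\cong\mathbf{Q}(-2)$; the class in degree $2$ of (\ref{stratum}) is then the image of $y_2$, and the class in degree $7$ is the product of $y_2$ with the degree-$5$ class pulled back from the complement. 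For $n=0$ the reductive part is $\mathrm{PGL}_2\times\mathrm{PGL}_2$, whose classifying space has cohomology $\mathbf{Q}[y_4,y_4']$ with no degree-$2$ contribution, so both the degree-$2$ and the degree-$7$ classes are absent, producing (\ref{stratum0}).

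The main obstacle is the combinatorial bookkeeping in the Gorinov--Vassiliev stratification on $\mathbb{F}_n$: the presence of the ruling forces one to distinguish singular configurations concentrated on a single fibre from generic ones, and each such family must be enumerated in all low codimensions with its contribution to the bigraded spectral sequence computed with the correct Hodge--Tate weight. A subsidiary check is that the Borel-construction spectral sequence degenerates in the stable range, which I expect to follow from the purity of the mixed Hodge structures involved.
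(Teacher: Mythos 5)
Your first stage follows the paper's route: Gorinov--Vassiliev applied to the discriminant in $V_{d,n}$, with the key simplifications being that $\bar{H}_{\bullet}(B(\mathbb{F}_n,k);\pm\mathbf{Q})$ vanishes for $k\geq5$ (so only configurations of at most four points contribute) and that the locus of sections singular along $N$ prescribed points has the expected codimension $3N$ once $d\geq 2N+3n-1$ (Lemma \ref{Codim}); the fibre-concentrated configurations you worry about are disposed of by exactly these two facts rather than by a case-by-case enumeration. The genuine gap is in your second stage. If the spectral sequence with $E_2=H^{\bullet}(BG_n;\mathbf{Q})\otimes H^{\bullet}(\mathbf{P}V_{n,g}\setminus\mathbf{P}\Sigma;\mathbf{Q})$ degenerated, then $H^{\bullet}(N_n;\mathbf{Q})$ would contain a copy of $H^{\bullet}(BG_n;\mathbf{Q})$, in particular $y_2^2$ and $y_4$ would survive to give $H^4(N_n;\mathbf{Q})\neq0$ --- contradicting the very statement \eqref{stratum} you are proving. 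So degeneration must fail, and purity cannot be invoked to control the differentials: $N_n$ is only quasi-projective, its cohomology need not be pure (for $g=3,4,5$ the cohomology of $\mathcal{T}_g$ genuinely contains classes of weight different from the degree), and the competing differentials here connect Tate classes of equal weight, so Hodge-theoretic constraints are silent.

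Concretely, the paper first proves a Leray--Hirsch splitting $H^{\bullet}(X_{d,n};\mathbf{Q})\cong H^{\bullet}(X_{d,n}/GL_2(\mathbf{C});\mathbf{Q})\otimes H^{\bullet}(GL_2(\mathbf{C});\mathbf{Q})$ (Proposition \ref{divisibility}, which requires verifying surjectivity of restriction to an orbit by exhibiting explicit preimages of the generators inside $\bar{H}_{\bullet}(\Sigma_{d,n};\mathbf{Q})$), and then runs the Gysin sequence for the residual $\mathbf{C}^{*}$-bundle $X_{d,n}/GL_2(\mathbf{C})\to N_n$. Even at that point the known cohomology of the total space is consistent with \emph{two} different answers for $N_n$ (Tables \ref{tableBundle} and \ref{alternative}), one of which has nonzero $H^3$ and $H^4$ generated by $\kappa_1^2$. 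Ruling that out is not formal: it requires proving $\kappa_1^2=0$ in $H^4(N_n;\mathbf{Q})$, which the paper obtains from relations in the Chow ring of $N_n$ in the style of Canning--Larson. Your proposal contains no substitute for this input, and without it the computation of $H^i(N_n;\mathbf{Q})$ for $i=3,4$ (hence the whole of \eqref{stratum}) does not close. A smaller inaccuracy: the reductive part of $\operatorname{Aut}(\mathbb{F}_n)$ for $n\geq1$ is $\mathbf{C}^{*}\times GL_2(\mathbf{C})$ (two degree-two generators in $H^{\bullet}(BG_n;\mathbf{Q})$), not $\mathbf{G}_m\times \mathrm{PGL}_2$, which changes the bookkeeping in your $E_2$-page.
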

\begin{rk}
By Deligne's Hodge theory cohomology and Borel-Moore homology of complex quasi-projective varieties carry mixed Hodge structures. In particular, we will work with mixed Hodge structures which are extensions of rational Tate Hodge structures.
\end{rk}

Let us anticipate that the description in Proposition \ref{propMaroniStrata} generalizes to non singular sections of $\mathcal{O}_{\mathbb{F}_n}(hE_n+dF_n)$ with $i\leq\left\lfloor\frac{g+(3-2h)n+2}{4}\right\rfloor$ for $h\geq3,$ where $E_n,F_n$ are the classes of the unique irreducible curve of negative self-intersection, when $n>0,$ and of a fiber of the ruling, respectively. For the details we refer to Remark \ref{generalization}.

By considering then the spectral sequence associated to the Maroni stratification of $\mathcal{T}_g,$ we finally obtain a description of the stable cohomology of $\mathcal{T}_g,$ for $g$ sufficiently large. Precisely,
\begin{teo}The rational cohomology of $\mathcal{T}_g,$ in degree $i<\left\lfloor\frac{g}{4}\right\rfloor$, is 
$$H^i(\mathcal{T}_g;\mathbf{Q})=\begin{cases}
\mathbf{Q}&i=0,\\
\mathbf{Q}(-1)&i=2,\\
\mathbf{Q}(-2)&i=4,\\
0&\text{otherwise}.
\end{cases}$$
\end{teo}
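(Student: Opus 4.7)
My approach is to bootstrap from Proposition \ref{propMaroniStrata} using the Maroni stratification $\mathcal{T}_g=\bigsqcup_n N_n$, indexed by $n\geq 0$ of the same parity as $g$. Each $N_n$ is smooth and locally closed, and its complex codimension $c_n$ in $\mathcal{T}_g$ is zero for the open stratum and grows linearly afterwards (by a standard Shatz-type computation on the splitting type of the Tschirnhausen bundle on $\mathbf{P}^1$). Filtering by the closures $\overline{N_n}$ produces a spectral sequence whose $E_1$ entries are the Tate-twisted groups $H^j(N_n;\mathbf{Q})(-c_n)$ placed in total cohomological degree $j+2c_n$, and which converges to $H^*(\mathcal{T}_g;\mathbf{Q})$.

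In the stable range $i<\lfloor g/4\rfloor$ only finitely many strata can contribute, and the bound $\lfloor(g-3n+2)/4\rfloor$ of Proposition \ref{propMaroniStrata} is wide enough to determine every relevant $E_1$-entry. A straightforward tabulation, separately for $g$ even and $g$ odd, shows that the surviving contributions in low degree give $\mathbf{Q}$ in degree $0$, $\mathbf{Q}(-1)$ in degree $2$, and $\mathbf{Q}(-2)$ in degree $4$ (coming from the open stratum and the first non-open one, possibly through a Gysin shift), together with a ``staircase'' of pairs of Tate-type classes of matching weight in consecutive degrees $\geq 5$ that must cancel between adjacent strata; these pairs pair up the degree-$5$ and degree-$7$ classes of $N_n$ with the degree-$0$ and degree-$2$ classes of $N_{n+2}$.

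The theorem then follows once I verify that the $d_1$ differentials of the spectral sequence, namely the Gysin coboundaries between successive Maroni strata, are non-zero on every such pair. Weight considerations constrain each $d_1$ to land in the unique Tate-matched target in the adjacent column, so the question reduces purely to non-vanishing. This is the main obstacle: it amounts to identifying each Gysin coboundary geometrically, for instance as multiplication by a Chern class of the relative normal bundle to the deeper stratum, or by comparison with explicit computations on the ambient Hirzebruch families in the spirit of the analysis performed for $g=5$. Once the non-vanishing is checked, the spectral sequence collapses in the stable range $i<\lfloor g/4\rfloor$ to the groups claimed in the theorem, for both parities of $g$.
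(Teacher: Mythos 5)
Your overall strategy is the same as the paper's: run the Gysin spectral sequence in Borel--Moore homology associated to the Maroni stratification, feed in Proposition \ref{propMaroniStrata} for the columns, and observe that everything reduces to showing the differentials are non-zero on the matching Tate pairs. Your bookkeeping of the $E_1$ page and of the stable range is essentially right (modulo the small point that for $g$ even the first closed stratum $\mathcal{N}_2$ is a divisor rather than of codimension $2$, and that the cancellations occur partly on the $E^1$ page and partly on the $E^2$ page, since some matched pairs sit two columns apart). But the step you explicitly defer --- the non-vanishing of the Gysin coboundaries --- is the entire content of the theorem, and the routes you sketch for it do not obviously close the gap. Identifying the coboundary as multiplication by a Chern class of the normal bundle only restates the problem as the non-vanishing of that class, and comparison with $g=5$ is unavailable: the paper points out that $g=5$ lies outside the stable range (the codimension estimates of Lemma \ref{Codim} fail there), and indeed $H^\bullet(\mathcal{T}_5;\mathbf{Q})$ is \emph{not} given by the stable answer.

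The paper closes the gap by an indirect argument rather than by computing the Gysin maps. The classes that would survive if a differential vanished are exactly the (powers of $\kappa_1$ times the) fundamental classes $\left[\mathcal{N}_n\right]$ in $H^\bullet(\mathcal{T}_g;\mathbf{Q})$. By \cite{PV13} every Chow class on $\mathcal{N}_n$ is the restriction of a tautological class, and by \cite{PV15} the class $\left[\mathcal{N}_n\right]$ is a multiple of $\kappa_1^{n-1}$; by the Canning--Larson computation of $A^\bullet_{\mathbf{Q}}(\mathcal{T}_g)$ in \cite{CL} these classes vanish for $n\geq4$. Since the spectral sequence converges to $\bar H_\bullet(\mathcal{T}_g;\mathbf{Q})$, the only way the corresponding $E^1$ entries can die is if the indicated $d^2$ differentials have rank $1$; the ring structure (the second non-trivial class in each column being again a power of $\kappa_1$) then forces the $d^1$ differentials to have rank $1$ as well. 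You need some input of this kind --- an a priori identification of the limit in the relevant degrees, or equivalently the known vanishing of $\kappa_1^{n-1}$ for $n\geq4$ --- to make your collapse argument go through; without it the spectral sequence could a priori degenerate and produce many extra classes in degrees $\geq5$.
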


\begin{rk}
	Note that, if $g\equiv 2\operatorname{mod} 4$, the above description of the rational cohomology of $\mathcal{T}_g$ holds for $i\leq\left\lfloor\frac{g}{4}\right\rfloor.$
\end{rk}

In \cite{PV15} Patel and Vakil proved that the rational Chow ring $A^*_{\mathbf{Q}}(\mathcal{T}_g)$ coincides with its tautological subring, denoted $R^*_{\mathbf{Q}}(\mathcal{T}_g),$ which is defined as the subring generated by the pullback of tautological classes in $A^*_{\mathbf{Q}}(\mathcal{M}_g).$ In particular, they proved that it is generated by a single class in codimension 1, the kappa class $\kappa_1.$

Then, our main result also implies 
\begin{cor}
For $g,i$ such that $i<\left\lfloor\frac{g}{4}\right\rfloor$, 
$$\begin{cases}H^{i}(\mathcal{T}_g;\mathbf{Q})\cong R^{i/2}_{\mathbf{Q}}(\mathcal{T}_g),&i\text{ even,}\\
	H^{i}(\mathcal{T}_g;\mathbf{Q})=0,&i\text{ odd.}\\ 
	\end{cases}$$
\end{cor}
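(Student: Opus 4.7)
The plan is to deduce the corollary directly from the Theorem together with Patel and Vakil's description of the rational Chow ring in \cite{PV15}. The odd-degree vanishing is immediate from the Theorem. For even degrees, by Patel--Vakil the tautological ring $R^*_{\mathbf{Q}}(\mathcal{T}_g) = A^*_{\mathbf{Q}}(\mathcal{T}_g)$ is generated as a $\mathbf{Q}$-algebra by $\kappa_1 \in R^1$, so $R^i_{\mathbf{Q}}(\mathcal{T}_g)$ is spanned by $\kappa_1^i$ and has dimension at most one. The cycle class map
\begin{equation*}
\mathrm{cl} \colon R^i_{\mathbf{Q}}(\mathcal{T}_g) \longrightarrow H^{2i}(\mathcal{T}_g;\mathbf{Q})
\end{equation*}
is a ring homomorphism landing in the Hodge $(i,i)$-subspace of the weight-$2i$ part of the target. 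By the Theorem, for $i \in \{0,1,2\}$ the target is pure of Tate type and one-dimensional, and it vanishes for larger $i$ in the stable range; hence $\mathrm{cl}$ is either zero or an isomorphism in each degree, and what remains is to verify $\mathrm{cl}(\kappa_1) \neq 0$ and $\mathrm{cl}(\kappa_1^2) \neq 0$.

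For $i=1$, since $H^2(\mathcal{T}_g;\mathbf{Q}) \cong \mathbf{Q}(-1)$ is entirely of Hodge type $(1,1)$, the Lefschetz $(1,1)$-theorem applied to a smooth compactification of $\mathcal{T}_g$ and then restricted shows that the first Chern class map $c_1 \colon \mathrm{Pic}(\mathcal{T}_g) \otimes \mathbf{Q} \twoheadrightarrow H^2(\mathcal{T}_g;\mathbf{Q})$ is surjective. Patel--Vakil identify the source with $\mathbf{Q}\kappa_1$, so the surjection onto the one-dimensional target is actually an isomorphism and $\mathrm{cl}(\kappa_1)$ generates $H^2$.

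For $i=2$, I expect this to be the main obstacle: one must show $\kappa_1^2 \neq 0$ in $H^4(\mathcal{T}_g;\mathbf{Q})$. My plan is to trace back through the Maroni-stratification spectral sequence used to prove the Theorem and to identify the generator of $H^4$ stably with the square of the class generating $H^2$; the generators of the stable cohomology arise naturally as Chern classes of tautological bundles on the Hirzebruch-surface parameter spaces, which can be matched with $\kappa_1^2$ via Mumford's relation $\kappa_1 = 12\lambda_1$, which holds on $\mathcal{T}_g$ since the boundary class vanishes. As a fallback, since $H^4(\mathcal{T}_g;\mathbf{Q}) = \mathbf{Q}(-2)$ is one-dimensional of pure Tate type $(2,2)$, it suffices to exclude that the cup product $H^2 \otimes H^2 \to H^4$ vanishes; this can be done by a direct intersection computation on a conveniently chosen algebraic two-cycle in $\mathcal{T}_g$.
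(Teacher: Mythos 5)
Your overall strategy---deduce the corollary from the Theorem together with the Patel--Vakil description of $A^*_{\mathbf{Q}}(\mathcal{T}_g)$ as the tautological ring generated by $\kappa_1$, via the cycle class map---is the same as the paper's, and the odd-degree vanishing and the degree-one case are fine (your Lefschetz $(1,1)$ argument for $\mathrm{cl}(\kappa_1)\neq 0$ is a workable substitute for the paper's direct identification of the generator of $H^2$ with the Euler class $\xi$ of the $\mathbf{C}^*$-bundle, a nonzero multiple of $\kappa_1$, in the proof of Proposition 1.1). The problem is that the step you yourself flag as the main obstacle, namely $\mathrm{cl}(\kappa_1^2)\neq 0$ in $H^4(\mathcal{T}_g;\mathbf{Q})$, is never actually carried out: you offer a plan (trace back through the spectral sequence, invoke $\kappa_1=12\lambda_1$, or intersect with an unspecified algebraic two-cycle) but no argument. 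This is precisely where the paper does the real work. In the Gysin spectral sequence of the Maroni stratification (Tables 4 and 5), the class surviving in degree $4$ sits in the column of $N_2$ (resp.\ $N_3$ for $g$ odd) and is the pushforward of $\kappa_1|_{\mathcal{N}_2}$ (resp.\ the fundamental class $\left[\mathcal{N}_3\right]$); since $\left[\mathcal{N}_n\right]$ is a nonzero multiple of $\kappa_1^{n-1}$ by \cite[Proposition 6.2]{PV15}, this generator is a nonzero multiple of $\kappa_1^2$. That identification is what makes $H^4$ tautological, and it cannot be extracted from the ring structure of $H^{\bullet}(\mathcal{T}_g;\mathbf{Q})$ alone: a priori the cup product $H^2\otimes H^2\to H^4$ could vanish, so your ``fallback'' is exactly the assertion that needs proof.

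There is a second, smaller gap: for even $i$ with $6\leq i<\lfloor g/4\rfloor$ the corollary asserts $R^{i/2}_{\mathbf{Q}}(\mathcal{T}_g)=0$, i.e.\ $\kappa_1^{3}=0$ in the Chow ring. Your dichotomy ``$\mathrm{cl}$ is zero or an isomorphism'' only yields $\mathrm{cl}=0$ in those degrees; it does not yield $R^{i/2}=0$, since the cycle class map need not be injective. This vanishing must be imported: it follows from \cite[Theorem 1.1]{CL} (the classes $\left[\mathcal{N}_n\right]$ vanish for $n\geq 4$), which is also the input the paper uses to show the highlighted differentials in Tables 4 and 5 have rank $1$.
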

Where the isomorphism in the even degree case is induced by the cycle class map.
\begin{rk}
	For $g=3,4,5$, the cohomology $H^{\bullet}(\mathcal{T}_g;\mathbf{Q})$ is completely known from \cite{Loo}, \cite{Tom05}, \cite{Zhe}, respectively. However, in none of these cases the cohomology ring is tautological. Specifically, in these three cases, there are cohomology classes with Hodge weight different from the cohomological degree, which thus cannot be algebraic.
\end{rk}

From the proof of our main result, we can also deduce the stable cohomology of the moduli space $\mathcal{T}^\dagger_g$ of framed triple covers, i.e. the moduli space parametrizing pairs $(C,\alpha)$ with $C$ a smooth curve of genus $g$ and $\alpha$ a degree $3$ map from $C$ to a fixed $\mathbf{P}^1.$ Notice that $\mathcal{T}^\dagger_g$ is the underlying moduli space of the stack $\mathcal{H}_{3,g}^\dagger,$ defined in \cite{PV15}.
\begin{cor}
	Let $g\geq6,$ the rational cohomology of $\mathcal{T}_{g}^\dagger,$ in degree $i<\left\lfloor\frac{g}{4}\right\rfloor$, is 
	$$H^i(\mathcal{T}_{g}^\dagger;\mathbf{Q})=\begin{cases}
	\mathbf{Q}&i=0,\\
	\mathbf{Q}(-1)&i=2,\\
	\mathbf{Q}(-3)&i=5,\\
	\mathbf{Q}(-4)&i=7,\\
	0&\text{otherwise}.
\end{cases}$$
\end{cor}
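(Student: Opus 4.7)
The plan is to exploit the natural forgetful map $\pi\colon\mathcal{T}_g^\dagger\to\mathcal{T}_g$, which discards the identification of the target $\mathbf{P}^1$. Since two framings of a trigonal cover differ by an element of $\mathrm{PGL}_2$ and the generic trigonal curve has trivial automorphism group, $\pi$ is rationally a principal $\mathrm{PGL}_2$-bundle, and I would therefore approach the corollary via the Leray spectral sequence
\begin{equation*}
E_2^{p,q}=H^p(\mathcal{T}_g;\mathbf{Q})\otimes H^q(\mathrm{PGL}_2;\mathbf{Q})\Longrightarrow H^{p+q}(\mathcal{T}_g^\dagger;\mathbf{Q}),
\end{equation*}
whose local system is trivial since $\mathrm{PGL}_2$ is connected. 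The rational cohomology of $\mathrm{PGL}_2$ is $\mathbf{Q}$ in degree $0$ and $\mathbf{Q}(-2)$ in degree $3$, the latter class transgressing to the second Chern class of the universal $\mathrm{SL}_2$-bundle and hence of Hodge--Tate weight four.

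Plugging in the description of $H^*(\mathcal{T}_g;\mathbf{Q})$ from the main theorem, the $E_2$ page in the stable range $p+q<\lfloor g/4\rfloor$ has exactly six non-zero entries: at $(p,q)=(0,0),(2,0),(4,0),(0,3),(2,3),(4,3)$, with Tate twists $0,-1,-2,-2,-3,-4$ respectively. Because the fiber cohomology is concentrated in degrees $0$ and $3$, the only potentially non-trivial differential is $d_4\colon E_4^{p,3}\to E_4^{p+4,0}$, and for $p=2,4$ the target already vanishes in the stable range. Only $d_4^{0,3}\colon\mathbf{Q}(-2)\to\mathbf{Q}(-2)$ can be non-zero; if it is an isomorphism, the classes at $(0,3)$ and $(4,0)$ cancel, and the four surviving entries give precisely the cohomology claimed in the corollary, in degrees $0,2,5,7$ with Tate twists $0,-1,-3,-4$.

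The main obstacle is to show that $d_4^{0,3}$ is an isomorphism. Classically this transgression is the pullback of the generator $p_1$ of $H^4(B\mathrm{PGL}_2;\mathbf{Q})=\mathbf{Q}[p_1]$ along the classifying map $\mathcal{T}_g\to B\mathrm{PGL}_2$ of the bundle $\pi$; since $H^4(\mathcal{T}_g;\mathbf{Q})\cong\mathbf{Q}(-2)$ is one-dimensional, non-triviality of this pullback suffices. In the spirit of the paper, I would establish this by reworking the Maroni-stratification argument of the main theorem directly on $\mathcal{T}_g^\dagger$: each framed stratum $N_n^\dagger=\pi^{-1}(N_n)$ is a Gorinov-Vassiliev quotient of the smooth sections of $\mathcal{O}_{\mathbb{F}_n}(3E_n+dF_n)$ by the smaller group $\mathrm{Aut}_{\mathbf{P}^1}(\mathbb{F}_n)$ of automorphisms of $\mathbb{F}_n$ fixing the base, which for $n\geq 1$ is the solvable semidirect product $\mathbf{G}_m\ltimes\mathbf{G}_a^{n+1}$ and is rationally equivalent to $\mathbf{G}_m$. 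The proof of Proposition~\ref{propMaroniStrata} then applies with only the $\mathrm{PGL}_2$-equivariance step removed, and assembling the resulting stratum cohomologies via the Maroni spectral sequence for $\mathcal{T}_g^\dagger$ yields in particular $H^3(\mathcal{T}_g^\dagger;\mathbf{Q})=0$, which forces $d_4^{0,3}$ to be an isomorphism and completes the computation.
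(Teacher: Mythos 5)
Your reduction via the Leray spectral sequence of the $\mathrm{PGL}_2$-bundle $\pi\colon\mathcal{T}_g^\dagger\to\mathcal{T}_g$ is a legitimate reorganization of the computation, and you correctly isolate the single non-formal point: everything comes down to showing that the transgression $d_4^{0,3}\colon\mathbf{Q}(-2)\to H^4(\mathcal{T}_g;\mathbf{Q})=\mathbf{Q}\cdot\kappa_1^2$ is non-zero, i.e.\ that the characteristic class of $\pi$ is a non-zero multiple of $\kappa_1^2$. The paper instead runs the Maroni stratification directly on $\mathcal{T}_g^\dagger$ (Tables \ref{blabla1} and \ref{blabla}); the two routes meet at the same crucial fact.

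The gap is in how you propose to establish that fact. You claim that $H^3(\mathcal{T}_g^\dagger;\mathbf{Q})=0$ follows by ``reworking the Maroni-stratification argument of the main theorem'' on $\mathcal{T}_g^\dagger$, but this is circular: the strata $N_n^\dagger$ have strictly more cohomology than the $N_n$ (in particular $H^3(N_n^\dagger;\mathbf{Q})=\mathbf{Q}(-2)$ for every $n$, coming from the residual $SL_2$), so the $E^1$-page of the Maroni spectral sequence for $\mathcal{T}_g^\dagger$ has a non-zero entry $E^1_{0,-3}=H^3(N_0^\dagger;\mathbf{Q})=\mathbf{Q}(-2)$ contributing to degree $3$, and whether it survives is governed by the Gysin differential $E^1_{0,-3}\to E^1_{-1,-3}$. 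That differential is \emph{not} determined by the fundamental-class/$\kappa_1$-power arguments that settle the differentials for $\mathcal{T}_g$; the paper kills it only by importing \cite[Prop.~6.1]{PV15} together with Vistoli's theorem, i.e.\ the relation $A^\bullet(\mathcal{T}_g^\dagger)=A^\bullet(\mathcal{T}_g)/(\mu)$ with $\mu$ a non-zero multiple of $\kappa_1^2$ --- which is precisely the statement $d_4^{0,3}\neq0$ that you are trying to prove. So your argument moves the problem from one spectral sequence to another without supplying the needed geometric input; you must cite or prove the Patel--Vakil relation explicitly. Two smaller points: the ``only the $\mathrm{PGL}_2$-equivariance step removed'' claim glosses over the fact that Proposition \ref{propMaroniStrata} itself uses Leray--Hirsch for the full $GL_2$ and Lemma 4.2, so the stratum computation for $N_n^\dagger$ needs its own (short but non-trivial) justification; and at the top of the range the differential $d_4^{4,3}\to E_4^{8,0}$ requires knowing $H^8(\mathcal{T}_g;\mathbf{Q})=0$, which the main theorem only provides for $8<\lfloor g/4\rfloor$, so your argument loses a degree at the boundary compared with the stated range.
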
	
\begin{rk}
	Let us remark that our results prove that, for a sufficiently large $g,$ the rational Chow ring of $\mathcal{T}^\dagger_g$ is strictly smaller than that of $\mathcal{T}_g,$ which agrees with the description given by Canning and Larson in \cite{CL}. 
	Therefore, this contradicts \cite[Theorems A and B]{PV15}, which claim that the rational Chow ring of $\mathcal{T}^\dagger_g$ is isomorphic to that of $\mathcal{T}_g$. In fact,  there was an error in the last section of their preprint concerning relations between kappa classes, which has been corrected by Canning and Larson.
\end{rk}

\subsection*{Outline}
The paper is organized as follows. In section 2 we set up the notation and introduce results from algebraic topology and commutative algebra which will be used in the course of the main proofs.
Then, in section 3 we will present Gorinov-Vassiliev's method and apply it to our setting. Finally, in section 4 we give a proof of Proposition 1.1 and in section 5 we prove Theorem 1.2 and Corollary 1.4.

\subsection*{Acknowledgements} {The paper is a product of my PhD at University of Padova. I would like to thank first my advisor, Orsola Tommasi, for the helpful conversations and suggestions. I also thank Samir Canning and Hannah Larson for discussing with me their recent work and for sharing with me their description of the rational Chow ring of the Hurwitz stack $\mathscr{H}_{3,g}.$ In particular I thank Samir Canning for his help with the proof of Lemma 4.2.\\
I am also grateful to Melanie Matchett Wood for discussions on an early version of this work.}

\section{Notation and preliminary lemmas}
\subsection{Trigonal curves as divisors in $\mathbb{F}_n$}
Recall that, for $n\geq0,$ the $n\mbox{-}$th Hirzebruch surface is defined as $$\mathbb{F}_n=\mathbf{P}(\mathcal{O}_{\mathbf{P}^1}\oplus\mathcal{O}_{\mathbf{P}^1}(n)).$$
The Picard group and the intersection form on a Hirzebruch surface are well known.
\begin{prop}[{\cite[V.2]{Hart}}]\label{propHar}
	Let  $n\geq0,$ $\pi:\mathbb{F}_n\cong\mathbf{P}(\mathcal{O}_{\mathbf{P}^1}\oplus\mathcal{O}_{\mathbf{P}^1}(n))\rightarrow \mathbf{P}^1$ is a rational ruled surface and
	\begin{enumerate}
	\item $\operatorname{Pic}(\mathbb{F}_n)\cong\mathbf{Z}E_n\oplus\mathbf{Z}F_n,$ where $E_n$ is the class of the image of the section $(0,1)$ of $\mathcal{O}_{\mathbf{P}^1}\oplus\mathcal{O}_{\mathbf{P}^1}(n),$ which is the unique irreducible curve of negative self-intersection when $n>0,$ and $F_n$ is the class of any fiber of the ruling.
	\item $E_n, F_n$ satisfy $$E_n^2=-n,\qquad F_n^2=0,\qquad E_n\cdot F_n=1.$$
	\item $K_n \sim -2E_n + (-2-n)F_n$, where $K_n$ denotes the canonical divisor on $\mathbb{F}_n$.
	\end{enumerate}
\end{prop}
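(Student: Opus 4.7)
The three claims are classical facts for $\mathbf{P}^1$-bundles over $\mathbf{P}^1$. The plan is to prove them in order: the Picard group via the projective-bundle splitting, the intersection numbers via an explicit normal-bundle computation, and the canonical divisor by applying adjunction to two known smooth rational curves.

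For $\operatorname{Pic}(\mathbb{F}_n)$, I would invoke the standard exact sequence
\[ 0 \to \pi^*\operatorname{Pic}(\mathbf{P}^1) \to \operatorname{Pic}(\mathbb{F}_n) \to \operatorname{Pic}(\pi^{-1}(\mathrm{pt})) \to 0, \]
which splits as soon as $\pi$ admits a section. The section $E_n$ described in the statement provides such a splitting, and together with the fiber class $F_n$ this yields the claimed free generators. For the intersection form, $F_n^2=0$ and $E_n\cdot F_n=1$ are immediate (two distinct fibers are disjoint; a section meets each fiber transversally at one point). The key computation is $E_n^2=-n$: restricting the relative Euler sequence on $\mathbf{P}(\mathcal{O}_{\mathbf{P}^1}\oplus\mathcal{O}_{\mathbf{P}^1}(n))$ to $E_n$ identifies the normal bundle $N_{E_n/\mathbb{F}_n}$ with $\mathcal{O}_{\mathbf{P}^1}(-n)$, whence $E_n^2=\deg N_{E_n/\mathbb{F}_n}=-n$. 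Uniqueness of $E_n$ as the only irreducible curve of negative self-intersection when $n>0$ then follows a posteriori from the intersection form: any irreducible $C=aE_n+bF_n$ distinct from $E_n$ and from a fiber must satisfy $a\geq 0$ and $b\geq an$ by non-negativity of $C\cdot F_n$ and $C\cdot E_n$, giving $C^2=a(2b-an)\geq 0$.

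For the canonical class, write $K_n=\alpha E_n+\beta F_n$. Applying the adjunction formula $2g(C)-2=C\cdot(K_n+C)$ to the fiber $F_n$ (a smooth rational curve with $F_n^2=0$) yields $K_n\cdot F_n=-2$, hence $\alpha=-2$; applying it to $E_n$ (smooth rational, with $E_n^2=-n$) yields $K_n\cdot E_n=n-2$, whence $2n+\beta=n-2$ and $\beta=-n-2$. The main obstacle in the whole argument is the normal-bundle identification $N_{E_n/\mathbb{F}_n}\cong\mathcal{O}_{\mathbf{P}^1}(-n)$: this is the one step that really requires pinning down the sign conventions for $\mathbf{P}(\mathcal{E})$ and the tautological quotient in Hartshorne's setup. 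Once that is settled, all remaining computations are formal linear algebra in the intersection lattice, which is why the statement is usually just quoted from \cite[V.2]{Hart}.
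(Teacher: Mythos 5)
Your argument is correct, and the paper offers no proof of its own here --- it simply quotes the statement from Hartshorne V.2, where the Picard group, the intersection numbers, and the canonical class are established by essentially the same route you describe (section-induced splitting of $\operatorname{Pic}$, normal-bundle computation for $E_n^2=-n$, adjunction on a fiber and on $E_n$ for $K_n$). Your a posteriori uniqueness argument for the negative curve and the flagged convention issue for $N_{E_n/\mathbb{F}_n}$ are both handled correctly, so nothing further is needed.
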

 \begin{rk}
 When $n=0,$ $\mathbb{F}_0\cong\mathbf{P}^1\times\mathbf{P}^1$ and in this case, $E_0$ $F_0$ are lines, each of a distinct ruling in $\mathbf{P}^1\times\mathbf{P}^1,$ both with trivial self-intersection.\end{rk}

It is also known that any trigonal curve of genus $g$ can be embedded in a Hirzebruch surface $\mathbb{F}_n$ and more precisely, by Proposition \ref{propHar}.3 and the genus formula, as a divisor of class
\begin{equation}\label{eq1}
	C\sim 3E_n+\frac{g+3n+2}{2}F_n,
\end{equation}
for a unique integer $n$ such that $g\equiv n\operatorname{mod}2$ and $0\leq n\leq (g+2)/3.$ The integer $n$ is called the Maroni invariant \cite{Mar46}. 

\begin{defin} Let $0\leq n\leq (g+2)/3$ be such that $g\equiv n\operatorname{mod}2$ and $d=\frac{g+3n+2}{2}\geq3n.$
Define $V_{d,n}$ to be the vector space of global sections of $\mathcal{O}_{\mathbb{F}_n}(3E_n+dF_n).$ Let $X_{d,n}$ be the open subset of sections defining smooth curves and define the discriminant locus $\Sigma_{d,n}$ as its complement in $V_{d,n}.$ 
\end{defin}

There is an explicit way to compute the dimension of $V_{d,n}.$\\
Recall that a further description of a Hirzebruch surface $\mathbb{F}_n,$ with $n\geq1,$ is given by blowing up the weighted projective space $\mathbf{P}(1,1,n)$ at its singular point $\left[0,0,1\right]$:
$$\mathbb{F}_n=Bl_{\left[0,0,1\right]}\mathbf{P}(1,1,n),$$
where $\mathbf{P}(1,1,n)=\operatorname{Proj}\mathbf{C}\left[x,y,z\right]$ such that $\operatorname{deg}x=\operatorname{deg}y=1$ and $\operatorname{deg}z=n.$
Then, a polynomial $f$ defining a trigonal curve of degree $d$ in $\mathbf{P}(1,1,n)$ is of the form:
\begin{equation}\label{poly}
	f(x,y,z)=\alpha_{d-3n}(x,y)z^3+\beta_{d-2n}(x,y)z^2+\gamma_{d-n}(x,y)z+\delta_d(x,y)=0
\end{equation}
where $\alpha_{d-3n}(x,y),\beta_{d-2n}(x,y),\gamma_{d-n}(x,y),\delta_d(x,y)$ are homogeneous polynomials in the coordinates $x,y,$ of degrees $d-3n$, $d-2n$, $d-n$, $d$ respectively, with $d\geq 3n$.\\
We can visualize the coefficients in the following figure:
\begin{equation}\label{par}
	\begin{array}{lcccccccccccccc}
		d-3n+1 &&		&        &        &\bullet&        &\dots&        &\dots&        &\bullet&         &         &\\
		d-2n+1 &&        &        &\bullet&         &\bullet&      &\dots&        &\bullet&         &\bullet&         &\\
		d-n+1    &&        &\bullet&        &\bullet&         &\dots&        &\dots&         &\bullet&        &\bullet&\\
		d+1&&\bullet&        &\bullet&         &\dots&        &\dots&         &\dots&        &\bullet&        &\bullet
	\end{array}
\end{equation}
where the $j\mbox{-}$th row, $j=1,\dots,4,$ corresponds to the coefficients of monomials $x^{a}y^{b}z^{3-j+1}$ with $a+b=d-n(3-j+1).$
Counting the number of parameters we get that $v_{d,n}:=\text{dim}_{\mathbf{C}}V_{d,n}=4d+4-6n,$ when $n\geq 1.$ Note that this also agrees when $n=0:$ $V_{d,0}$ is isomorphic to the vector space of polynomials of bidegree $(3,d),$ with $d=\frac{g+2}{2},$ on $\mathbf{P}^1\times\mathbf{P}^1,$ i.e.
$$V_{d,0}\cong \mathbf{C}\left[x_0,x_1,y_0,y_1\right]_{3,d}\cong \mathbf{C}^{4(d+1)}.$$
\subsection{Configuration spaces}
Configuration spaces play a crucial role in Gorinov-Vassiliev's method. In this subsection we give their definition together with some results that will be used repeatedly.
\begin{defin}
	Let $Z$ be a topological space, the space of ordered configurations of $k$ points in $Z$ is defined as
	$$F(Z,k)=Z^k\backslash\bigcup_{1\leq i<j\leq k}\{(z_1,\dots,z_k)\in Z^k|z_i=z_j\}.$$
	The quotient by the natural action of the symmetric group $\mathfrak{S}_k$ is denoted by $B(Z,k)$ and it is the space of unordered configurations of $k$ points in $Z.$\\
	For any subspace $Y\subseteq B(Z,k),$ the local system $\pm\mathbf{Q}$ over $Y$ is the one locally isomorphic to $\mathbf{Q}$ that changes its sign under any loop defining an odd permutation in a configuration from $Y.$ We will denote by $\bar{H}_{\bullet}(Y;\pm\mathbf{Q})$ the \emph{ Borel-Moore homology }of $Y$ with twisted coefficients, or the \emph{twisted Borel-Moore homology} of $Y.$ 
\end{defin}
Let us recall some basic results on the twisted Borel-Moore homology of some configuration spaces.
\begin{lem}[{\cite[Lemma 2]{Vas}}]\label{VAS}
	\begin{itemize}
	\item[a.] $\bar{H}_{\bullet}(B(\mathbf{C}^N,k);\pm\mathbf{Q})$ is trivial for any $N\geq1$, $k\geq2.$\sloppy
	\item[b.] $\bar{H}_{\bullet}(B(\mathbf{P}^N,k);\pm\mathbf{Q})=H_{\bullet-k(k-1)}(G(k,\mathbf{C}^{N+1});\mathbf{Q})$, where $G(k,\mathbf{C}^{N+1})$ is the Grassmann manifold of $k\mbox{-}$dimensional subspaces in $\mathbf{C}^{N+1}$. In particular the group $\bar{H}_{\bullet}(B(\mathbf{P}^N,k);\pm\mathbf{Q})$ is trivial if $k>N+1.$
\end{itemize}	
\end{lem}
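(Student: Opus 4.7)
Plan: Both parts follow from combining two standard tools: the transfer for the free $\mathfrak{S}_k$-cover $F(X,k)\to B(X,k)$, which identifies $\bar H_\ast(B(X,k);\pm\mathbf{Q})$ with the sign-isotypic component of $\bar H_\ast(F(X,k);\mathbf{Q})$, together with Poincar\'e-Lefschetz duality on the smooth complex manifold $F(X,k)$, converting Borel-Moore homology into ordinary cohomology with an appropriate degree shift.

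For (a), I would invoke the classical description of $H^\ast(F(\mathbf{C}^N,k);\mathbf{Q})$ due to Arnold (for $N=1$) and F.\,Cohen (for $N\ge 2$): it is the graded-commutative algebra generated by classes $\omega_{ij}$ for $1\le i<j\le k$ in degree $2N-1$, modulo the Arnold relations $\omega_{ij}\omega_{jk}+\omega_{jk}\omega_{ki}+\omega_{ki}\omega_{ij}=0$, with $\mathfrak{S}_k$ acting by permuting indices. A direct representation-theoretic analysis---or, more geometrically, the existence of free translation and scaling actions of $\mathbf{C}^N\times\mathbf{R}_{>0}$ on $B(\mathbf{C}^N,k)$ stripping off contractible factors---shows that for $k\ge 2$ the sign-isotypic component vanishes in every degree, whence (a) via transfer.

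For (b), the plan is geometric: send a configuration $\{p_1,\dots,p_k\}\subset\mathbf{P}^N$ in linearly general position to the $k$-plane $\langle p_1,\dots,p_k\rangle\subset\mathbf{C}^{N+1}$. This defines a Zariski-locally trivial fibration from the open stratum $B_{\mathrm{gen}}(\mathbf{P}^N,k)\subseteq B(\mathbf{P}^N,k)$ onto $G(k,\mathbf{C}^{N+1})$, with fibre $B_{\mathrm{gen}}(\mathbf{P}^{k-1},k)$. A direct computation using the $PGL_k$-action shows that the twisted Borel-Moore homology of this fibre is $\mathbf{Q}$ in a single degree and zero otherwise, accounting for the stated shift by $k(k-1)$. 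Stratifying $B(\mathbf{P}^N,k)$ by the dimension of the linear span, the complementary strata fibre over smaller Grassmannians with fibres of the form $B(\mathbf{C}^M,k')$, whose twisted Borel-Moore homology vanishes by part (a) or by descending induction on $k$; the associated Leray spectral sequence then collapses to the stated isomorphism, and when $k>N+1$ the Grassmannian is empty, forcing total vanishing. The main obstacle is the sign-isotypic vanishing in (a): the exterior algebra on the $\omega_{ij}$'s carries a priori non-trivial sign components, so it is essential to exploit the Arnold relations and/or the geometric reduction by free actions to force the cancellation---a careful combinatorial or equivariant-homotopy argument is the crux of the proof.
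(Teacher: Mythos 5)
The paper does not actually prove this lemma: it is quoted verbatim from Vassiliev (\cite[Lemma 2]{Vas}) and used as a black box, so there is no internal argument to compare yours against. Your outline does follow the standard route (essentially Vassiliev's): the transfer identification of $\bar{H}_\bullet(B(X,k);\pm\mathbf{Q})$ with the sign-isotypic part of $\bar{H}_\bullet(F(X,k);\mathbf{Q})$, and for (b) the stratification by the dimension of the linear span together with the fibration of the open stratum over $G(k,\mathbf{C}^{N+1})$. That skeleton is sound, and your sanity check on dimensions (the shift $k(k-1)$ being the real dimension of the general-position fibre $PGL_k/((\mathbf{C}^*)^{k-1}\rtimes\mathfrak{S}_k)$) is correct.

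However, the argument as written has a genuine gap exactly where you flag ``the crux'': part (a) is asserted, not proved, and the two shortcuts you offer do not close it. Stripping off the free $\mathbf{C}^N\times\mathbf{R}_{>0}$ factors only replaces $B(\mathbf{C}^N,k)$ by a homotopy/Borel--Moore-equivalent compactly reduced model; it cannot by itself force vanishing of twisted homology. Nor can one reduce to the anti-invariants of a single transposition $\tau$: those are generically nonzero (already for $k=3$, $N=1$ the class $\omega_{13}-\omega_{23}\in H^1(F(\mathbf{C},3);\mathbf{Q})$ is $\tau$-anti-invariant for $\tau=(12)$), even though the full sign-isotypic component vanishes. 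So one really needs the global $\mathfrak{S}_k$-representation content of the Arnold--Cohen algebra (e.g.\ Cohen's character computation, or Vassiliev's induction on $N$ with Arnold's theorem on braid groups with sign coefficients as the base case). Since every subsequent vanishing statement in the paper (Lemmas 2.3, 2.4 and the truncation of the spectral sequence at the fourth column) rests on (a), this step cannot be waved at. In (b) there are two smaller inaccuracies: the non-generic strata fibre over smaller Grassmannians with fibre the space of $k$-point configurations \emph{spanning} a fixed $\mathbf{P}^m$ with $k>m+1$ --- not a space of the form $B(\mathbf{C}^M,k')$ --- and their vanishing requires first establishing $\bar{H}_\bullet(B(\mathbf{P}^m,k);\pm\mathbf{Q})=0$ for $k>m+1$ via the affine-cell stratification of $\mathbf{P}^m$, part (a), and a K\"unneth formula for twisted Borel--Moore homology; and the claim that the general-position fibre has twisted Borel--Moore homology $\mathbf{Q}$ concentrated in degree $k(k-1)$ also needs an argument (e.g.\ via the same affine stratification and induction) rather than a bare appeal to the $PGL_k$-action. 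These are repairable, but as it stands the proposal is a correct plan rather than a proof.
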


The twisted Borel-Moore homology of a configuration space $B(Z,k)$ can immediately be read off from Lemma \ref{VAS} for any space $Z$ which admits a stratification whose strata are affine spaces. Such a stratification induces a stratification on $B(Z,k)$, whose strata record the number of points in each stratum of $X$.

\begin{lem}[{\cite[Lemma 2.3]{Zhe}}]
	$\bar{H}_{\bullet}(B(\mathbf{P}^2\backslash\{\text{point}\},2);\pm\mathbf{Q})$ is $\mathbf{Q}(3)$ in degree $6$ and $0$ in all other degrees. $\bar{H}_{\bullet}(B(\mathbf{P}^2\backslash\{\text{point}\},k);\pm\mathbf{Q})$ is trivial if $k\geq3$.
\end{lem}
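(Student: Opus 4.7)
The strategy is to reduce the computation to Lemma 2.2 by stratifying $Y := \mathbf{P}^2 \setminus \{p\}$ into affine cells. I would pick a line $L$ passing through the removed point $p$, and write $Y = (\mathbf{P}^2 \setminus L) \sqcup (L \setminus \{p\}) \cong \mathbf{C}^2 \sqcup \mathbf{C}$. This induces a stratification of $B(Y,k)$ by the loci $\Sigma_j$ of configurations containing exactly $j$ points in the $\mathbf{C}^2$-stratum and $k-j$ points in the $\mathbf{C}$-stratum, for $j = 0,1,\ldots,k$, each of which is homeomorphic to $B(\mathbf{C}^2, j) \times B(\mathbf{C}, k-j)$.

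The next step is to analyze how the twisted coefficient system $\pm\mathbf{Q}$ restricts to each stratum. Since two points sitting in different strata cannot be swapped by a loop inside $\Sigma_j$, the monodromy of $\pm\mathbf{Q}$ over $\Sigma_j$ factors through $\mathfrak{S}_j \times \mathfrak{S}_{k-j}$, and the restriction of the sign character of $\mathfrak{S}_k$ is the external product of the two sign characters; hence $\pm\mathbf{Q}|_{\Sigma_j}$ is the external tensor product of the twisted local systems on the two factors. Künneth for twisted Borel–Moore homology then gives
$$\bar{H}_\bullet(\Sigma_j; \pm\mathbf{Q}) \cong \bar{H}_\bullet(B(\mathbf{C}^2, j); \pm\mathbf{Q}) \otimes \bar{H}_\bullet(B(\mathbf{C}, k-j); \pm\mathbf{Q}).$$

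By Lemma 2.2(a), each tensor factor vanishes whenever its own configuration number is at least $2$. Hence $\bar{H}_\bullet(\Sigma_j; \pm\mathbf{Q})$ is nonzero only when $j \leq 1$ and $k-j \leq 1$, which forces $k \leq 2$. For $k \geq 3$ every stratum has trivial twisted Borel–Moore homology, so the spectral sequence associated to the stratification collapses to yield $\bar{H}_\bullet(B(Y,k); \pm\mathbf{Q}) = 0$. For $k = 2$, only the stratum with $(j, k-j) = (1,1)$ contributes, and $B(\mathbf{C}^2, 1) \times B(\mathbf{C}, 1) \cong \mathbf{C}^3$ has Borel–Moore homology concentrated in degree $6$ with value $\mathbf{Q}(3)$. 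Only a single stratum being nonzero, the spectral sequence degenerates at $E_1$ and gives the claimed answer.

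The only point requiring real care is verifying that $\pm\mathbf{Q}$ decomposes as an external tensor product on each stratum, i.e.\ that no odd permutation interchanges points belonging to distinct strata; this is a purely topological observation about configuration-type preservation along paths. Once it is in place, the computation becomes a direct bookkeeping exercise combining the Künneth formula with the vanishing statement from Lemma 2.2.
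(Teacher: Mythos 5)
Your proof is correct and takes essentially the same route the paper indicates: stratify $\mathbf{P}^2\setminus\{\text{point}\}$ into the affine cells $\mathbf{C}^2\sqcup\mathbf{C}$, pass to the induced stratification of $B(\cdot,k)$ by the number of points in each cell, and combine Lemma 2.2(a) with the K\"unneth decomposition of the sign local system. The paper treats this as immediate from that stratification (citing the earlier work for the statement); you have simply spelled out the details, including the correct verification that monodromy within a stratum cannot exchange points lying in different cells.
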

Note that $\mathbb{F}_n$, for $n\geq 0,$ can be stratified into two affine cells, one isomorphic to $\mathbf{P}^2\backslash\{\text{point}\}\cong \mathbf{A}^2\cup\mathbf{A}^1$ and the other isomorphic to $\mathbf{P}^1\cong\mathbf{A}^1\cup\mathbf{A}^0.$ Then the twisted Borel-Moore homology of $B(\mathbb{F}_n,k)$ can be deduced from Lemmas 2.2 and 2.3. Precisely,

\begin{lem} For $n\geq 0,$
	\begin{equation*}
		\bar{H}_i(B(\mathbb{F}_n,1);\pm\mathbf{Q})=\begin{cases}
			\mathbf{Q},&i=0,\\
			2\mathbf{Q}(1),&i=2,\\
			\mathbf{Q}(2),&i=4,\\
			0,& otherwise;
		\end{cases}
	\end{equation*}
	\begin{equation*}
		\bar{H}_i(B(\mathbb{F}_n,2);\pm\mathbf{Q})=\begin{cases}
			2\mathbf{Q}(1),&i=2,\\
			2\mathbf{Q}(2),&i=4,\\
			2\mathbf{Q}(3),&i=6,\\
			0,& otherwise;
		\end{cases}
	\end{equation*}
	\begin{equation*}
		\bar{H}_i(B(\mathbb{F}_n,3);\pm\mathbf{Q})=\begin{cases}
			\mathbf{Q}(2),&i=4,\\
			2\mathbf{Q}(3),&i=6,\\
			\mathbf{Q}(4),&i=8,\\
			0,& otherwise;
		\end{cases}
	\end{equation*}
	\begin{equation*}
		\bar{H}_i(B(\mathbb{F}_n,4);\pm\mathbf{Q})=\begin{cases}
			\mathbf{Q}(4),&i=8,\\
			0,& otherwise;
		\end{cases}
	\end{equation*}
	\begin{equation*}
		\bar{H}_\bullet(B(\mathbb{F}_n,k);\pm\mathbf{Q})=0,\qquad \forall k\geq5. 
	\end{equation*}
\end{lem}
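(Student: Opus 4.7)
The plan is to exploit the stratification $\mathbb{F}_n = U \sqcup C$ noted in the paragraph preceding the lemma, with $U \cong \mathbf{P}^2\backslash\{\text{point}\}$ open and $C \cong \mathbf{P}^1$ closed. This induces a locally closed stratification of $B(\mathbb{F}_n, k)$ whose strata $S_j$, indexed by $j \in \{0, \dots, k\}$, consist of configurations with exactly $j$ points on $C$ and are homeomorphic to $B(U, k-j) \times B(C, j)$. I would then use the spectral sequence for twisted Borel-Moore homology associated to this stratification, whose $E^1$-terms decompose via K\"unneth as
\[
\bar{H}_{\bullet}(S_j; \pm\mathbf{Q}) \cong \bar{H}_{\bullet}(B(U, k-j); \pm\mathbf{Q}) \otimes \bar{H}_{\bullet}(B(C, j); \pm\mathbf{Q}).
\]
This decomposition is legitimate because on $S_j$ the local system $\pm\mathbf{Q}$ is the exterior tensor product of the sign local systems on the two factors: a loop in $S_j$ induces a permutation of the $k$ points lying in the subgroup $\mathfrak{S}_{k-j}\times\mathfrak{S}_j \subset \mathfrak{S}_k$, on which the sign character factors as a product.

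By Lemma 2.3 the first tensor factor vanishes as soon as $k - j \geq 3$, and by Lemma 2.2.b applied to $\mathbf{P}^1$ the second factor vanishes as soon as $j \geq 3$, since then the Grassmannian $G(j, \mathbf{C}^2)$ is empty. Thus only pairs $(k-j, j) \in \{0, 1, 2\}^2$ can contribute, which immediately yields the last line of the statement: for $k \geq 5$ no such pair exists, and the twisted Borel-Moore homology vanishes. For each $k \in \{1, 2, 3, 4\}$ I would then enumerate the at most three surviving strata and compute each tensor product explicitly, reading off the twisted Borel-Moore homologies of the factors from Lemmas 2.2 and 2.3 together with the elementary descriptions of $\bar{H}_{\bullet}(\mathbf{P}^1; \mathbf{Q})$ and $\bar{H}_{\bullet}(\mathbf{P}^2\backslash\{\text{point}\}; \mathbf{Q})$.

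To finish, I would argue that the spectral sequence collapses at $E^1$. A direct look at the factors shows that each surviving $E^1$-term is a pure Tate Hodge structure in which any class in Borel-Moore degree $i$ carries Tate twist $i/2$; in particular both $B(U, k-j)$ and $B(C, j)$ enjoy this property, the latter via Lemma 2.2.b and the purity of the Grassmannian cohomology. The differentials of the spectral sequence are morphisms of mixed Hodge structures, so they must preserve weights, yet they lower the total Borel-Moore degree by one; this is incompatible with the equality between degree and twice the Tate twist, so every differential vanishes. Summing the $E^1$-contributions then reproduces the tables in the statement. The main obstacle is not conceptual but combinatorial: correctly tracking the Tate twists of the individual pieces and aggregating the resulting contributions stratum by stratum for each value of $k$.
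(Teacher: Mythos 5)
Your proposal is correct and follows essentially the same route as the paper: the paper likewise stratifies $\mathbb{F}_n$ into $\mathbf{P}^2\backslash\{\text{point}\}$ and $\mathbf{P}^1$, passes to the induced stratification of $B(\mathbb{F}_n,k)$ by the number of points in each piece, and reads off the answer from Lemmas 2.2 and 2.3. The only difference is that you make explicit the K\"unneth decomposition of the strata, the vanishing bounds $k-j\leq 2$, $j\leq 2$, and the weight argument for degeneration, all of which the paper leaves implicit.
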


Note also that the twisted Borel-Moore homology groups computed above agree with the ones computed by Tommasi in \cite[Lemma 2.13]{Tom05} for $\mathbb{F}_0\cong\mathbf{P}^1\times\mathbf{P}^1.$ 
\subsection{Codimensions of spaces of sections in $V_{d,n}$}

We now want to consider elements in $V_{d,n}$ which are singular at the configuration spaces that we have just discussed. Since we are dealing with surfaces, requiring any polynomial $f\in V_{d,n}$ to be singular at any point in $\mathbb{F}_n$ will impose 3 linearly independent conditions. 
If we require $f$ to be singular at a configuration of $N$ points we expect the number of imposed conditions to be $3N$ and we will prove that this is indeed what happens when $d$ is sufficiently large with respect to $N.$ 

\begin{lem}\label{Codim}
	Fix $N\geq1$. For any $n\geq 0,$ the restriction of
	$$\{(f,p_1,\dots,p_N)\in V_{d,n}\times B(\mathbb{F}_n,N)|p_1,\dots,p_N\in\emph{Sing}(f)\}\xrightarrow{\pi} B(\mathbb{F}_n,N)$$
	to the locus where at most two points $p_i$ lie on the same line of the ruling is a vector bundle of rank $v_{d,n}-3N$ provided $d\geq2N+3n-1$ holds.
\end{lem}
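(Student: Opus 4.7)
The goal is to prove that for every configuration $(p_1,\dots,p_N)$ in the specified open stratum, the evaluation of $1$-jets
\[
\operatorname{ev}:V_{d,n}\longrightarrow\bigoplus_{i=1}^N J^1_{p_i}\cong\mathbf{C}^{3N}
\]
is surjective. Since $\operatorname{ev}$ arises from a morphism between the trivial bundle $V_{d,n}\times B(\mathbb{F}_n,N)$ and the rank-$3N$ bundle of $1$-jets on $\mathbb{F}_n$, pointwise surjectivity forces the kernel to be a sub-vector bundle of rank $v_{d,n}-3N$, which is exactly the structure of $\pi$ claimed in the lemma.

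Let $F_{q_1},\dots,F_{q_m}$ denote the distinct fibers of the ruling containing some $p_i$, with $a_j\in\{1,2\}$ points on $F_{q_j}$ and $m\leq N$. Set $L=\mathcal{O}_{\mathbb{F}_n}(3E_n+dF_n)$, $D=F_{q_1}+\cdots+F_{q_m}$, $Z=\{p_1,\dots,p_N\}$, and write $Z^{(1)}$ for the first-order neighborhood of $Z$. Since $Z\subseteq D$ gives $\mathcal{I}_Z^2\supseteq\mathcal{I}_D^2$, we have $Z^{(1)}\subseteq 2D$ as subschemes, and $\operatorname{ev}$ factors as
\[
V_{d,n}\xrightarrow{\,(i)\,}H^0(2D,L|_{2D})\xrightarrow{\,(ii)\,}H^0(Z^{(1)},L|_{Z^{(1)}}).
\]
For arrow $(i)$, surjectivity follows from $H^1(\mathbb{F}_n,L(-2D))=H^1(\mathbb{F}_n,\mathcal{O}(3E_n+(d-2m)F_n))=0$. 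Applying the Leray spectral sequence to $\pi:\mathbb{F}_n\to\mathbf{P}^1$ and using the splitting $\pi_*\mathcal{O}(3E_n+bF_n)=\bigoplus_{j=0}^3\mathcal{O}_{\mathbf{P}^1}(b-jn)$, this reduces to $\bigoplus_{j=0}^3 H^1(\mathbf{P}^1,\mathcal{O}(d-2m-jn))=0$, which holds when $d-2m-3n\geq -1$; since $m\leq N$, the hypothesis $d\geq 2N+3n-1$ suffices.

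Arrow $(ii)$ decomposes as a product over $j$ of maps on the thickened fibers $H^0(2F_{q_j},L|_{2F_{q_j}})\to H^0(Z^{(1)}_j,L|_{Z^{(1)}_j})$. Since $F_{q_j}^2=0$ the conormal bundle of $F_{q_j}$ is trivial, so a section of $L|_{2F_{q_j}}$ may be written as a pair $(f_0,f_1)\in H^0(\mathbf{P}^1,\mathcal{O}(3))^{\oplus 2}$, where $f_0$ is the restriction of $f$ to $F_{q_j}$ and $f_1$ is its normalized transverse derivative; the $1$-jet at $p\in F_{q_j}$ reads $(f_0(p),f_0'(p),f_1(p))$. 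The evaluation at the $a_j$ points of the fiber then splits into $f_0\mapsto(f_0(p_i),f_0'(p_i))_i$ and $f_1\mapsto(f_1(p_i))_i$, both of which are surjective for $a_j\leq 2$ because $\mathcal{O}_{\mathbf{P}^1}(3)$ separates $1$-jets at any two points (as $3\geq 2\cdot 2-1$). This is the essential point where the hypothesis that no three $p_i$ share a fiber is used: three or more collinear points would force nontrivial $1$-jet relations on the cubic $f_0$, causing $\operatorname{ev}$ to drop rank. Granting $(i)$ and $(ii)$, the composition $\operatorname{ev}$ is surjective on every fiber of the configuration space, so its kernel has constant dimension $v_{d,n}-3N$ and inherits the structure of a vector bundle of that rank, which concludes the proof.
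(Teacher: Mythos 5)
Your proof is correct, but it takes a genuinely different route from the paper. The paper argues by brute force: it first reduces (by an explicit parameter count) to the case where all $N$ points lie on distinct fibers, then fixes coordinates and exhibits, for each point $p_1$, three explicit polynomials of the shape $x^r\ell_2^2\cdots\ell_N^2$, $x^{r-1}y\ell_2^2\cdots\ell_N^2$, $x^{r-n}z\ell_2^2\cdots\ell_N^2$ whose images under the evaluation map span the matrices supported on the first column; this requires a separate case analysis according to whether $p_1$ lies on the exceptional section $E_n$ (because the paper works with weighted polynomials on the singular model $\mathbf{P}(1,1,n)$) and a separate treatment of $n=0$. You instead prove surjectivity of the $1$-jet evaluation cohomologically, factoring it through the doubled divisor $2D$ of fibers containing the points: step $(i)$ is the vanishing $H^1(\mathbb{F}_n,\mathcal{O}(3E_n+(d-2m)F_n))=0$, which via the pushforward to $\mathbf{P}^1$ yields exactly the bound $d\geq 2N+3n-1$, and step $(ii)$ reduces to the fact that $\mathcal{O}_{\mathbf{P}^1}(3)$ separates $1$-jets at two points on each fiber. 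Your argument is uniform in $n$, needs no special case for points on $E_n$, absorbs the ``two points on one fiber'' configurations that the paper handles separately in its preliminary remark, and isolates precisely where the hypothesis of at most two points per fiber is used (and why three collinear points fail). What the paper's explicit computation buys in exchange is a concrete witness that the degree bound is sharp and a template for the generalization to $\mathcal{O}(hE_n+dF_n)$ in its Remark 2.6 --- though your method generalizes at least as cleanly, since $\mathcal{O}_{\mathbf{P}^1}(h)$ separates $1$-jets at two points for all $h\geq 3$ and the $H^1$-vanishing gives the bound $d\geq 2N+hn-1$ directly. The only cosmetic imprecision is that the splitting $H^0(2F_{q_j},L|_{2F_{q_j}})\cong H^0(\mathcal{O}_{\mathbf{P}^1}(3))^{\oplus 2}$ depends on a choice of trivialization of the (trivial) conormal bundle; it would be marginally cleaner to phrase step $(ii)$ as a two-step filtration argument, surjecting first onto the transverse-derivative components via the subspace of sections vanishing on $F_{q_j}$ and then onto the tangential $1$-jets via the quotient, but this does not affect the validity of the proof.
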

\begin{rk}
	Before proving the lemma, note that we can further simplify the assumption that no more than two points lie on the same line of the ruling, by considering only the case where they all belong to distinct lines of the ruling. Clearly, curves which are singular at pairs of points in the same line of the ruling are easier to treat and we will indeed see that we can limit ourselves to consider curves of smaller degree which are singular at points lying on distinct lines of the ruling. The reason behind this is that, in both cases, the vector subspaces in $V_{d,n}$ of curves which are singular at these points have the expected codimension.
	
	In fact, let us consider first a set of $N=2k$ points consisting of $k$ pairs of points on $k$ distinct lines of the ruling. It is easy to show that the vector subspace of curves which are singular at these $2k$ points is non empty for $d\geq\frac{3}{2}(k+n)-1,$ which is always satisfied when $d\geq2N+3n-1$, and it is given exactly by all polynomials of the form
\begin{equation}\label{ell}\ell_1 \cdots\ell_k g,
\end{equation}
	where $\ell_1,\dots,\ell_k$ are the equations of the $k$ lines of the ruling containing the $2k$ points and $g$ is a section of $\mathcal{O}_{\mathbb{F}_n}(3E_n+(d-k)F_n),$ vanishing at the $2k$ prescribed points. Then, counting parameters as we have done in \eqref{par}, the vector subspace generated by these polynomials has dimension $4(d-k)+4-6n-2k$, which is non-negative by the assumption $d\geq\frac{3}{2}(k+n)-1.$ Hence it has codimension $6k=3N$ in $V_{d,n}.$ 
	
	This also holds if we consider a set of $N$ points consisting of $2k$ points, defined as above, together with $h$ points, each lying of a distinct line of the ruling, all different from the $k$ lines of the ruling containing the $2k$ points. In this case the vector subspace of curves which are singular at these $N$ points is given exactly by polynomials of the form \eqref{ell}, where we further require $g$ to be singular at the $h$ points. As we will prove below, this last assumption will increase the codimension by $3h$ and thus the codimension in $V_{d,n}$ of the vector subspace generated by these polynomials will be $6k+3h=3N.$ 
\end{rk}
\begin{proof} 
	The vector bundle $\pi:\{(f,p_1,\dots,p_N)\in V_{d,n}\times B(\mathbb{F}_n,N)|p_1,\dots,p_N\in\emph{Sing}(f)\}\rightarrow B(\mathbb{F}_n,N)$ is a vector sub-bundle of the trivial bundle $V_{d,n}\times B(\mathbb{F}_n,N)\rightarrow B(\mathbb{F}_n,N).$ Moreover, for any subspace $A\subset B(\mathbb{F}_n,N),$ the restriction of $\{(f,p_1,\dots,p_N)\in V_{d,n}\times B(\mathbb{F}_n,N)|p_1,\dots,p_N\in\emph{Sing}(f)\}$ to $A$ is still a vector bundle.
	Thus we only need to prove that all fibres have same dimension $v_{d,n}-3N.$\sloppy
	
	By the above remark, we will assume that all $p_i$'s lie on distinct lines of the ruling. 
	Following the proof of \cite[Lemma 4]{Tom20}, let us fix a set of $N$ distinct points $\{p_1,\dots,p_N\}\subset\mathbb{F}_n.$ 
	
	Assume first that $n\geq1,$ and consider the evaluation map 
	\begin{equation*}
		\mathbf{C}\left[x,y,z\right]_d\xrightarrow{ev}M_{3, N}(\mathbf{C})	
	\end{equation*}
	which assigns to each $f(x,y,z)=\alpha(x,y)z^3+\beta(x,y)z^2+\gamma(x,y)z+\delta(x,y)$ in the weighted polynomial ring $\mathbf{C}\left[x,y,z\right]_d$, with $\operatorname{deg}x=\operatorname{deg}y=1$ and $\operatorname{deg}z=n,$ a $3\times N$ matrix whose $i$-th column is defined by
	\begin{equation*}
		\begin{cases}\begin{pmatrix}
				\partial f/\partial x(p_i)\\
				\partial f/\partial y(p_i)\\
				\partial f/\partial z(p_i)\\
			\end{pmatrix}& \text{if } p_i\in \mathbb{F}_n\backslash E_n;\\
			\begin{pmatrix}
				\partial \alpha/\partial x_0(p_i)\\
				\partial \alpha/\partial y_0(p_i)\\
				\beta(p_i)
			\end{pmatrix}& \text{if } p_i\in E_n;\\		
		\end{cases}
	\end{equation*}
	where $x_0,y_0$ denote the coordinates in $E_n\cong\{(\left[0,0,1\right],[x_0,y_0])\in\{\left[0,0,1\right]\}\times\mathbf{P}^1\}\subset Bl_{\left[0,0,1\right]}\mathbf{P}(1,1,n)\cong\mathbb{F}_n.$ 
	The evaluation map is a linear map and its kernel is exactly the fiber of $\pi$ over $\{p_1,\dots,p_N\}.$ Therefore, in order to prove the lemma, it is sufficient to show that $ev$ is surjective for $d\geq 2N+3n-1.$
	
	Consider first the case where $p_1\in \mathbb{F}_n\backslash E_n.$ After an appropriate change of coordinates we may assume that $p_1=\left[1,0,0\right].$
	Fix a degree $r\geq 3n+1$ and consider the polynomials 
	$$\varphi_0=x^r\ell_2^2\cdots\ell_N^2,$$
	$$\varphi_1=x^{r-1}y\ell_2^2\cdots\ell_N^2,$$
	$$\varphi_2=x^{r-n}z\ell_2^2\cdots\ell_N^2.$$
	where $\ell_2,\dots,\ell_N$ are the equations of the lines of the ruling containing $p_2,\dots,p_N.$
	All $\varphi_i$ vanish with multiplicity 2 at $p_2,\dots,p_N$ and hence they are all sent to matrices with trivial entries outside the first column.
	Moreover, since $p_1\not\in\ell_i,$ for $i\geq2,$ then 
	$$\ell_2\cdots\ell_N(p_1)=a_0\neq0;\qquad\frac{\partial\ell_2\cdots\ell_N}{\partial x}(p_1)=Na_0;\qquad\frac{\partial\ell_2\cdots\ell_N}{\partial y}(p_1)=a_1\neq0.$$
	and the evaluations on such polynomials are
	$$ev(\varphi_0)=
	\begin{pmatrix}
		ra_0^2+2Na_0^2&0&\dots\\
		2a_1&0&\dots\\
		0&0&\dots\\
	\end{pmatrix},$$
	$$ev(\varphi_1)=\begin{pmatrix}
		0&0&\dots\\
		a_0^2&0&\dots\\
		0&0&\dots
	\end{pmatrix},$$
	$$ev(\varphi_2)=\begin{pmatrix}
		0&0&\dots\\
		0&0&\dots\\
		a_0^2&0&\dots\\
	\end{pmatrix}.$$
	Hence $ev(\varphi_0),ev(\varphi_1),ev(\varphi_2)$ are linearly independent generators for the subspace in $M_{3, N}(\mathbf{C})$ of matrices with zeros outside the first column.
	
	Next, consider the case where $p_1\in E_n:$ it is of the form $(\left[0,0,1\right],\left[x_0,y_0\right])\in Bl_{\left[0,0,1\right]}\mathbf{P}(1,1,n),$ and we may assume $p_1=(\left[0,0,1\right],\left[1,0\right]),$ after an appropriate change of coordinates. We now define 
	$$\varphi_0=x^{r-3n}z^3\ell_2^2\cdots\ell_N^2,$$
	$$\varphi_1=x^{r-3n-1}yz^3\ell_2^2\cdots\ell_N^2,$$
	$$\varphi_2=x^{r-2n}z^2\ell_2^2\cdots\ell_N^2,$$
	where $\ell_2,\dots,\ell_N$ are defined as in the previous case.
	The evaluations on these polynomials now are
	$$ev(\varphi_0)=
	\begin{pmatrix}
		(r+2N-3n)a_0^2&0&\dots\\
		2a_0a_1&0&\dots\\
		0&0&\dots\\
	\end{pmatrix},$$
	$$ev(\varphi_1)=\begin{pmatrix}
		0&0&\dots\\
		a_0^2&0&\dots\\
		0&0&\dots
	\end{pmatrix},$$
	$$ev(\varphi_2)=\begin{pmatrix}
		0&0&\dots\\
		0&0&\dots\\
		a_0^2&0&\dots\\
	\end{pmatrix},$$
	which are again linearly independent generators for the subspace in $M_{3, N}(\mathbf{C})$ of matrices with zeros outside the first column.
	
	Hence, we have proved that all matrices in $M_{3, N}(\mathbf{C})$ with trivial entries outside the first column belong to the image of $ev$ and by symmetry this can be generalized to all the other columns, proving the surjectivity of $ev.$ This has been proved using polynomials $\varphi_i$ which have degree $d\geq r+2(N-1)\geq2N+3n-1.$ This bound is actually sharp: if $E_n$ is a component of the curve, then we can refer to \eqref{par}: by counting the number of parameters of a polynomial $f$ defining a curve having $E_n$ as a component and $N$ distinct singular points on $E_n,$ we get that $d\geq2N+3n-1,$ as desired.\\
	
Finally, when $n=0,$ recall that $V_{d,0}\cong \mathbf{C}\left[x_0,x_1,y_0,y_1\right]_{3,d},$ hence any polynomial $f\in\mathbf{C}\left[x_0,x_1,y_0,y_1\right]_{3,d}$ is homogeneous of degree 3 in the variables $x_0,x_1$ and homogeneous of degree $d$ in the variables $y_0,y_1,$ at the same time. By definition, $f$ is singular at a point $p$ if and only if $\frac{\partial f}{\partial x_0}(p)=\frac{\partial f}{\partial x_1}(p)=\frac{\partial f}{\partial y_0}(p)=\frac{\partial f}{\partial y_1}(p)=0,$ and these are actually three independent conditions. In fact, if we denote $p=(\left[X_0,X_1\right],\left[Y_0,Y_1\right])$ the coordinates of $p\in\mathbf{P}^1\times\mathbf{P}^1,$ then $f(x_0,x_1,Y_0,Y_1)=g(x_0,x_1)\in \mathbf{C}\left[x_0,x_1\right]_3$ and $f(X_0,X_1,y_0,y_1)=h(y_0,y_1)\in \mathbf{C}\left[y_0,y_1\right]_d.$
By Euler's formula both on $g$ and $h$, the vanishing of any three partial derivatives forces also the fourth one to be zero.
So we can define the evaluation map as	
\begin{align*}
		\mathbf{C}\left[x_0,x_1,y_0,y_1\right]_{3,d}&\xrightarrow{ev}M_{3, N}(\mathbf{C})	\\
		f&\mapsto\begin{pmatrix}
		\partial f/\partial x_0(p_1)&\dots&\partial f/\partial x_0(p_N)\\
		\partial f/\partial x_1(p_1)&\dots&\partial f/\partial x_1(p_N)\\
		\partial f/\partial y_0(p_1)&\dots&\partial f/\partial y_0(p_N)\\
		\end{pmatrix}.
\end{align*}
Notice that, since $E_0$ is now a line of the ruling distinct from the one containing $F_0,$ there is no need to discuss if $p_1\in E_0$ or not. Choose coordinates such that $p_1=(\left[1,0\right],\left[1,0\right])\in\mathbf{P}^1\times\mathbf{P}^1$ and define
$$\varphi_0=x_0^r\ell_2^2\cdots\ell_N^2,\qquad\varphi_1=x_0^{r-1}x_1\ell_2^2\cdots\ell_N^2,\qquad\varphi_2=x_0^{r-1}y_0\ell_2^2\cdots\ell_N^2,$$
where $r\geq 1$ and $\ell_2,\dots,\ell_N$ are again the equations of the lines of a ruling containing $p_2,\dots,p_N.$  One can check that the evaluations on these polynomials are again linearly independent generators for the subspace in $M_{3, N}(\mathbf{C})$ of matrices with zeros outside the first column, hence $ev$ is again surjective and the polynomials $\varphi_i$ now have degree $d\geq r+2(N-1)\geq2N-1,$ which agrees with what we have proved for $n\geq1.$
\end{proof}

\begin{rk}\label{rkEW}
	Let us notice that the above lemma can be generalized to sections of $\mathcal{O}_{	\mathbb{F}_n}(hE+dF)$ for any $h\geq3.$ Specifically, for such values of $h,$ the above statement also holds if we replace $V_{d,n}$ with $H^0(\mathbb{F}_n;\mathcal{O}_{\mathbb{F}_n}(hE+dF))$ and the inequality  $d\geq2N+3n-1$ with $d\geq2N+hn-1$.	
	Indeed, for $n=0$ the claim follows from exactly the same argument of the proof Lemma \ref{Codim}, while for $n>0$ the claim can be proved after a slight modification. 
	If $p_1\in\mathbb{F}_n\backslash E_n,$ it suffices to define the polynomials $\varphi_i$ exactly as in the proof of Lemma \ref{Codim}, with $r\geq hn+1.$
	On the other hand, if $p_1$ belongs to the exceptional divisor, and it is of the form  $p_1=(\left[0,0,1\right],\left[1,0\right])\in Bl_{\left[0,0,1\right]}\mathbf{P}(1,1,n),$ we define 
	$$\varphi_0=x^{r-hn}z^h\ell_2^2\cdots\ell_N^2,$$
	$$\varphi_1=x^{r-hn-1}yz^h\ell_2^2\cdots\ell_N^2,$$
	$$\varphi_2=x^{r-(h-1)n}z^{h-1}\ell_2^2\cdots\ell_N^2.$$
	In both cases the polynomials $\varphi_i$ have degree $d\geq2N+hn-1,$ and it is easy to check that their images through the evaluation map are again linearly independent generators for the subspace in $M_{3,N}(\mathbf{C})$ of matrices with zero outside the first column.
\end{rk}

\section{Vassiliev's spectral sequence}
In this section, we will exhibit the method that allows us to compute the first columns of the Vassiliev's spectral sequence, hence the stable part of the cohomology of $X_{d,n}.$

First of all, recall that the rational cohomology of $X_{d,n}$ is equivalent to the Borel-Moore homology of $\Sigma_{d,n}$ by Alexander duality:
$$\tilde{H}^{\bullet}(V_{d,n}\backslash\Sigma_{d,n};\mathbf{Q})\cong\bar{H}_{2v_{d,n}-1-\bullet}(\Sigma_{d,n};\mathbf{Q})(-v_{d,n}).$$

Then, in order to compute the Borel-Moore homology of $\Sigma_{d,n}$ we apply Gorinov-Vassiliev's method, which consists in constructing a simplicial resolution of $\Sigma_{d,n},$ admitting a filtration such that the Borel-Moore homology of the strata define a spectral sequence converging to that of $\Sigma_{d,n}.$\\

Precisely, assume that $d\geq2N+3n-1,$ let $I\subset\underline{N}=\{1,\dots,N\}$ and consider the simplex $\Delta_{\bullet},$ where
$$\Delta_{I}=\{g:I\rightarrow\left[0,1\right]|\sum_{i\in I}g(i)=1\}.$$ Then we will have natural maps $e_{IJ}:\Delta_I\rightarrow\Delta_J$ given by extending $g\in\Delta_I$ to 0 on $J\backslash I.$ \\
We define the $\underline{N}\mbox{-}$cubical spaces $\{\chi_I\}_{I\subset\underline{N}},\{\overline{\chi}_I\}_{I\subset\underline{N}},$ 
where, if $I=\{i_1,\dots,i_r\}$ such that $i_j\neq N$ for any $j=1,\dots,r,$
$$\chi_I:=\{(f,y_{1},\dots,y_{r})\in V_{d,n}\times \prod_{j=1}^r B(\mathbb{F}_n,{i_j})|y_1\subset\dots\subset y_r\subset\text{Sing}(f)\},$$
$$\chi_{\emptyset}:=\Sigma_{d,n},\qquad \text{and} \qquad\chi_{I\cup\{N\}}:=\{(f,y_{1},\dots,y_{r})\in \chi_I|f\in \overline{\Sigma}_N\};$$
$$\overline{\chi}_I:=\{(f,y_{1},\dots,y_{r})\in V_{d,n}\times \prod_{j=1}^r \overline{B(\mathbb{F}_n,{i_j})}|y_1\subset\dots\subset y_r\subset\text{Sing}(f)\},$$
$$\overline{\chi}_{\emptyset}:=\Sigma_{d,n},\qquad\text{and}\qquad\overline{\chi}_{I\cup \{N\}}  :=\{(f,y_{1},\dots,y_{r})\in \overline{\chi}_I|f\in \overline{\Sigma}_N\};$$
where $\overline{\Sigma}_N$ denotes the Zariski closure of the locus in $\Sigma_{d,n}$ of polynomials defining curves with at least $N$ distinct singular points.
The natural forgetful maps $\varphi_{IJ}:\chi_J\rightarrow \chi_I$, $\overline{\varphi}_{IJ}:\overline{\chi}_J\rightarrow \overline{\chi}_I$ give $\chi_\bullet$ and $\overline{\chi}_\bullet$ the structure of cubical spaces over the index set $\underline{N}$. 
Consider then their geometric realisation, which is defined for $\chi_{\bullet}$ as the quotient
$$|\chi|=\left(\bigsqcup_{I\subset\{1,\dots,N\}}\chi_I\times\Delta_I\right)/\sim,$$
where $((f,Y),g)\sim((f',Y'),g')$ if and only if $(f,Y)=\varphi_{IJ}(f',Y')$ and $g'=e_{IJ}(g),$ and similarly for $\overline{\chi}_{\bullet}.$\\
We can then construct a surjective map 
$$\psi:|\overline{\chi}|\rightarrow |\chi|$$ as follows: let $((f,Y),g)\in\overline{\chi}_I\times\Delta_I,$ and let $\left[(f,Y),g\right]$ be its corresponding class in $|\overline{\chi}|.$ We define $\psi(\left[(f,Y),g\right])$ as the class in $|\chi|$ of the element $((f,Z),h)\in\chi_J\times\Delta_J$ where
$$J:=\{j\in\{1,\dots,N\}| y_i\in Y_j \text{ for some }i\in I\}=\{j(i)|i\in I\},$$
$$Z:=\prod_{j\in J}z_j; z_j=y_i \text{ for any $i$ s.t. $j(i)=j$},$$
$$h:J\rightarrow\left[0,1\right],\,h(j):=\sum_{i\in I| j(i)=j} g(i).$$
We consider the spaces $|\overline{\chi}_\bullet|$ with the quotient topology under the equivalence relation $\sim$ of the direct topology of the $\overline{\chi}_I,$ and on $|\chi_{\bullet}|$ the topology induced by $\psi.$\\
\begin{prop}[\cite{Gor}]
	The augmentation $|\chi_\bullet|\rightarrow \chi_{\emptyset}=\Sigma_{d,n}$ is a homotopy equivalence and induces an isomorphism on the Borel-Moore homology groups. 
\end{prop}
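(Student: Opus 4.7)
The approach is to invoke the general criterion underlying Gorinov-Vassiliev's method: once an augmented cubical resolution of a variety is proper and has contractible fibers, the augmentation is a weak homotopy equivalence and, being proper, also induces an isomorphism on rational Borel-Moore homology via the Vietoris-Begle theorem (or equivalently, the Leray spectral sequence, whose $E_2$-page collapses to the base because each fiber is $\mathbf{Q}$-acyclic). Accordingly, the proof reduces to checking (a) that the augmentation $|\chi_\bullet|\to\Sigma_{d,n}$ is a proper surjection, and (b) that each of its fibers is contractible.

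For (a), the auxiliary closed cubical space $\overline{\chi}_\bullet$ and the map $\psi$ play a decisive role. The closed configuration spaces $\overline{B(\mathbb{F}_n,i)}$ are compact, so each $\overline{\chi}_I$ is closed in $V_{d,n}\times\prod_j\overline{B(\mathbb{F}_n,i_j)}$ and projects properly onto $V_{d,n}$; hence $|\overline{\chi}|\to\Sigma_{d,n}$ is proper. Since $|\chi|$ carries the quotient topology from the surjection $\psi$ and the augmentation factors through $\psi$, the map $|\chi|\to\Sigma_{d,n}$ is proper as well. Surjectivity is immediate, as any $f\in\Sigma_{d,n}$ has a singular point $p$, giving $(f,\{p\})\in\chi_{\{1\}}$ in the preimage.

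For (b), fix $f\in\Sigma_{d,n}$ and set $S=\operatorname{Sing}(f)$, $k=|S|$. Unpacking the gluing, the fiber of $|\chi|\to\Sigma_{d,n}$ over $f$ is built out of two kinds of cells. For $I\subseteq\{1,\dots,N-1\}$, $\chi_I|_f$ parameterizes chains $y_{i_1}\subset\dots\subset y_{i_r}$ of subconfigurations of $S$ of prescribed sizes $i_j\in I$, and the cells $\chi_I|_f\times\Delta_I$ assemble, via the simplicial degeneracy relations and the collapses induced by $\psi$, into the order complex of the poset of nonempty subsets of $S$ of size at most $N-1$; when $k\leq N-1$ this is the full face poset of $\Delta^{k-1}$, whose order complex is its barycentric subdivision, hence contractible. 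If $f\in\overline{\Sigma}_N$, the extra cells $\chi_{I\cup\{N\}}|_f\times\Delta_{I\cup\{N\}}$ come into play; by definition $\chi_{I\cup\{N\}}|_f=\chi_I|_f$ and $\Delta_{I\cup\{N\}}$ is the cone on $\Delta_I$ with apex $v_N$, so these cells realize a cone over the previously described complex, with apex the single point coming from $\chi_{\{N\}}|_f\times\{v_N\}=\{f\}\times\{v_N\}$. A cone is always contractible, so the fiber is contractible in this case as well.

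The main obstacle is carrying out this fiber analysis rigorously: one has to verify that the quotient topology from $\psi$ on $|\chi|$ is genuinely fine enough to yield the order-complex-plus-cone description, and that the degenerate case $f\in\overline{\Sigma}_N$ with $k<N$ (allowed because $\overline{\Sigma}_N$ is a Zariski closure) is still covered by the same cone picture. Once these details are settled, the homotopy equivalence of augmentations follows from the Vietoris-Begle theorem applied to a proper surjection with contractible fibers, and the Borel-Moore homology isomorphism follows from the analogous statement for proper surjections with rationally acyclic fibers.
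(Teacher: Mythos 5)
Your proposal is correct and follows essentially the same route as the paper: reduce to showing the augmentation is a proper surjection with contractible fibers (properness inherited from the compact closed configuration spaces via $\overline{\chi}_\bullet$ and the quotient map $\psi$, contractibility via the cone structure on each fiber, with apex the full singular configuration or the point $[(f,N=1)]$ when $f\in\overline{\Sigma}_N$). Your order-complex/barycentric-subdivision description of the fiber is just a more explicit unpacking of the paper's one-line cone argument, and the degenerate case you flag is indeed absorbed by the coning to $v_N$.
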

\begin{proof}
It is sufficient to prove that the augmentation map is proper with contractible fibers. Properness follows from the properness of the maps $\overline{\chi}_I\times\Delta_I\rightarrow \Sigma_{d,n}:$ the preimage of a compact coincides with the composition of such maps and the continuous map $\psi.$\\
Let $f\in\Sigma_{d,n},$ the fiber over $f$ is a cone with vertex the point $\left[(f,p_1,\dots,p_k),k=1\right]$ if $f$ has exactly $k<N$ singular points $p_1,\dots,p_k$, or $\left[(f,N=1)\right]$ if $f\in\overline{\Sigma}_N$, so the augmentation map is contractible in both cases. 
\end{proof}
\begin{rk}
From the theory of cubical spaces, \cite[Section 5.3]{PS08}, the isomorphism $\bar{H}_{\bullet}(|\chi_\bullet|;\mathbf{Q})\cong\bar{H}_{\bullet}(\Sigma_{d,n};\mathbf{Q})$ is also an isomorphism of mixed Hodge structures.
\end{rk}
As anticipated $|\chi_{\bullet}|$ admits an increasing filtration
$$\text{Fil}_i|\chi_\bullet|:=\text{Im}\left(|\chi_{\bullet}|_{\underline{i}}|\hookrightarrow|\chi_{\bullet}|\right),$$
where $|\chi_{\bullet}|_{\underline{i}}|$ is the geometric realization of the cubical spaces restricted to the index set $\underline{i}.$\\
Define locally closed subsets $$F_i:=\text{Fil}_i\backslash \text{Fil}_{i-1},$$ 
\begin{prop}[\cite{Tom05}]
	The filtration $\text{Fil}_i$ defines a spectral sequence that converges to the Borel-Moore homology of $\Sigma_{d,n}$, whose $E^1_{p,q}\mbox{-}$term is isomorphic to $\bar{H}_{p+q}(F_p;\mathbf{Q}).$
\end{prop}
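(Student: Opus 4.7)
The plan is to deduce this from the standard spectral sequence associated with a finite increasing filtration by closed subspaces. First I would verify that each $\text{Fil}_i|\chi_\bullet|$ is closed in $|\chi_\bullet|$: by construction, $|\chi_\bullet|_{\underline{i}}$ is the geometric realization of the cubical spaces restricted to the index set $\underline{i}$, and its image inside $|\chi_\bullet|$ is closed because the surjection $\psi$ from Proposition 3.1 is proper (being obtained from the proper maps $\overline{\chi}_J\times\Delta_J\rightarrow\Sigma_{d,n}$), while the pieces $\overline{\chi}_J\times\Delta_J$ for $J\subset\underline{i}$ form a closed subset of $\bigsqcup_{J\subset\underline{N}}\overline{\chi}_J\times\Delta_J$. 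Hence each $F_p = \text{Fil}_p\setminus\text{Fil}_{p-1}$ is locally closed, and the stratification of $|\chi_\bullet|$ by the $F_p$ is a finite stratification into locally closed subspaces.

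Next, for each pair of closed subspaces $\text{Fil}_{p-1}\subset\text{Fil}_p$, the long exact sequence of Borel--Moore homology associated to the closed-open decomposition $\text{Fil}_p=\text{Fil}_{p-1}\sqcup F_p$,
\begin{equation*}
\cdots\to\bar{H}_{p+q}(\text{Fil}_{p-1};\mathbf{Q})\to\bar{H}_{p+q}(\text{Fil}_p;\mathbf{Q})\to\bar{H}_{p+q}(F_p;\mathbf{Q})\to\bar{H}_{p+q-1}(\text{Fil}_{p-1};\mathbf{Q})\to\cdots
\end{equation*}
assembles the data into an exact couple. The associated spectral sequence has first page $E^1_{p,q}=\bar{H}_{p+q}(F_p;\mathbf{Q})$ and abuts to $\bar{H}_{p+q}(|\chi_\bullet|;\mathbf{Q})$. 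Since the filtration has only $N+1$ non-trivial steps, the spectral sequence degenerates after finitely many differentials, so convergence is automatic. Combining this with the isomorphism $\bar{H}_\bullet(|\chi_\bullet|;\mathbf{Q})\cong\bar{H}_\bullet(\Sigma_{d,n};\mathbf{Q})$ provided by the previous proposition then gives the claim.

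The main obstacle is the verification of the point-set topological properties that are needed in order to apply the standard long exact sequence of Borel--Moore homology to the pairs $(\text{Fil}_p,\text{Fil}_{p-1})$; this hinges on the explicit quotient topology on $|\chi_\bullet|$ induced via $\psi$, which is the reason why the construction of both $\chi_\bullet$ and $\overline{\chi}_\bullet$ is necessary. These technicalities are essentially treated in \cite{Tom05}, so I would only sketch them here and refer to that source for the finer details. Finally, by the remark following Proposition 3.1, all these Borel--Moore homology groups carry mixed Hodge structures and the boundary maps are morphisms of such structures, so that the spectral sequence is in fact one of mixed Hodge structures, which will be essential for the computations in the subsequent sections.
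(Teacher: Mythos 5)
Your argument is correct and is exactly the standard one: the paper itself gives no proof of this proposition, delegating it to \cite{Tom05}, and your exact-couple construction from the long exact sequences of Borel--Moore homology for the closed-open decompositions $\text{Fil}_p=\text{Fil}_{p-1}\sqcup F_p$, together with finiteness of the filtration and the isomorphism $\bar{H}_\bullet(|\chi_\bullet|;\mathbf{Q})\cong\bar{H}_\bullet(\Sigma_{d,n};\mathbf{Q})$ from the preceding proposition, is precisely the intended argument. Your deferral of the point-set verifications (closedness of the $\text{Fil}_i$ in the quotient topology induced by $\psi$) to \cite{Tom05} is consistent with how the paper treats them.
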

This spectral sequence is called the Vassiliev spectral sequence.\\

Then by the above proposition, in order to compute the Borel-Moore homology of the discriminant $\Sigma_{d,n}$ we need to consider first the Borel-Moore homology of each $F_i.$ By construction, we have that 
$$F_i=\left(\bigsqcup_{I\subset\{1,\dots,N\};\,\operatorname{max}I=i}\chi_I\times\Delta_I\right)/\sim.$$
When $i<N,$ by \cite[Thm. 3, Lemma 1]{Gor} $F_i$ is a non-orientable simplicial bundle over $\chi_{\{i\}}$ with fiber isomorphic to the interior of a $(i-1)\mbox{-}$dimensional simplex.\\
Moreover, by Lemma 2.5, since we are working under the assumption that $d\geq2N+3n-1,$
$\chi_{\{i\}}$ is a complex vector bundle over $B(\mathbb{F}_n,i)$ of rank $v_{d,n}-3i.$ Putting all together we obtain an explicit formula for the Borel-Moore homology of $F_i$ for $i<N,$ namely

\begin{equation}\label{BM}\bar{H}_{\bullet}(F_i;\mathbf{Q})=\bar{H}_{\bullet-2v_{d,n}+6i-i+1}(B(\mathbb{F}_n,i);\pm\mathbf{Q})\otimes\mathbf{Q}(v_{d,n}-3i),\end{equation}
Which can be computed by Lemma 2.4. In particular, for any $n\geq0,$ the configuration spaces $B(\mathbb{F}_n,k)$ have all the same twisted Borel-Moore homology, which is trivial for $k> 4.$ Thus, among the first $N-1$ strata, only $F_1,\dots,F_4$ contribute non-trivially to the Borel-Moore homology of $\Sigma_{d,n}$. They correspond to the following classification of singular configurations:
\begin{itemize}
	\item[(1)] One point, $\left[3\right]$ ;
	\item[(2)] Two points, $\left[6\right]$;
	\item[(3)] Three points, $\left[9\right]$;
	\item[(4)] Four points, $\left[12\right].$
\end{itemize}
By the formula \eqref{BM} and by Lemma 2.4, we compute the Borel-Moore homology of the associated strata and we get that the first columns of the $E^1\mbox{-}$page of the spectral sequence will look as the ones represented in Table \ref{table1}.
\begin{table}[h!]\caption{Spectral sequence converging to $\bar{H}_\bullet({\Sigma_{d,n};\mathbf{Q}})$.}\centering\label{table1}	
	\begin{tabular}{ c|ccccc }
		$2v_{d,n}-3$&$\mathbf{Q}(v_{d,n}-1)$ &&&&\\
		$2v_{d,n}-4$& & && &\\
		$2v_{d,n}-5$&$\mathbf{Q}(v_{d,n}-2)^2$ & && &\\
		$2v_{d,n}-6$& & && &\\
		$2v_{d,n}-7$&$\mathbf{Q}(v_{d,n}-3)$ &$\mathbf{Q}(v_{d,n}-3)^2$ && &\\
		$2v_{d,n}-8$& & && &  \\
		$2v_{d,n}-9$& &$\mathbf{Q}(v_{d,n}-4)^2$ && & \\
		$2v_{d,n}-10$& & && &\\
		$2v_{d,n}-11$& &$\mathbf{Q}(v_{d,n}-5)^2$ &$\mathbf{Q}(v_{d,n}-5)$& & \\
		$2v_{d,n}-12$& & && & \\
		$2v_{d,n}-13$& & &$\mathbf{Q}(v_{d,n}-6)^2$& & \\
		$2v_{d,n}-14$& & && &\\
		$2v_{d,n}-15$& & &$\mathbf{Q}(v_{d,n}-7)$& & \\
		$2v_{d,n}-16$& & && & \\
		$2v_{d,n}-17$& & &&$\mathbf{Q}(v_{d,n}-8)$ & \\
		$2v_{d,n}-18$& & && &\\

		\hline
		&1&2&3&4&$\dots$
	\end{tabular}
\end{table}
\begin{rk}
Notice that this does not agree with the first 5 columns of the spectral sequence obtained in \cite[Table 3]{Zhe}. Indeed, for $g=5$, we have that the inequality $d\geq2N+2$ is not satisfied when $N\geq 2,$ hence the corresponding configurations do not have the expected codimension.
\end{rk}
\begin{rk}
For $n=0,$ the spectral sequence agrees with the first 4 columns of the spectral sequence in \cite[Table 3]{Tom05}, twisted by $\mathbf{Q}(v_{d,0}-16)$ in degree $2(v_{d,0}-16)$.
\end{rk}
When $i=N$, recall that 
$$F_N=\left(\bigsqcup_{I\subset\{1,\dots,N\};\,\operatorname{max}I=N}\chi_I\times\Delta_I\right)/\sim.$$
Following \cite[Lemma 18]{Tom14} we can further stratify the stratum $F_N$ as the union of locally closed substrata
$$\phi_0=\left(\chi_{\{N\}}\times\Delta_{\{N\}}\right)/\sim,\qquad\phi_l=\left(\chi_{I\cup\{N\}}\times\Delta_{I\cup\{N\}}\right)/\sim;$$
where $I\subset\left\{1,\dots,N\right\}$ such that $\operatorname{max}I=l$ for any $1\leq l\leq N-1.$

Then, for any of these substratum we have natural maps
$$\phi_0\rightarrow \chi_{\{N\}}, \qquad\phi_l\rightarrow\chi_{\{l,N\}},$$
where $\phi_0\cong\chi_{\{N\}}$ by definition of $\sim,$ and the fiber of $\phi_l\rightarrow\chi_{\{l,N\}}$ is the interior of a $l\mbox{-}$dimensional simplex: it is a cone over the fiber of $F_l\rightarrow \chi_{\{l\}},$ which by \cite[Thm 3. Lemma 1]{Gor} is again the interior of a $(l-1)\mbox{-}$dimensional simplex.\\
Moreover, for any $f\in\overline{\Sigma}_N,$ the projections $(f,p_1,\dots,p_N)\rightarrow f$, $(f,p_1,\dots,p_N)\mapsto (f,p_1,\dots,p_l)$ 
define surjections
$$\{(f,p_1,\dots,p_N)\in V_{d,n}\times B(\mathbb{F}_n,N)|p_1,\dots,p_N\in\emph{Sing}(f)\}\rightarrow\chi_{\{N\}},$$ 
$$\{(f,p_1,\dots,p_N)\in V_{d,n}\times B(\mathbb{F}_n,N)|p_1,\dots,p_N\in\emph{Sing}(f)\}\rightarrow\chi_{\{l,N\}},$$
where the domain, by the assumption that $d\geq2N+3n-1$ and by Lemma 2.5, is a vector bundle of rank $v_{d,n}-3N$ over $B(\mathbb{F}_n,N),$ which has dimension $2N.$ Therefore, we have that 
$$\operatorname{dim}_{\mathbf{R}}\phi_0\leq 2v_{d,n}-2N,\qquad\text{and}\qquad\operatorname{dim}_{\mathbf{R}}\phi_l\leq 2v_{d,n}-2N+l;\qquad1\leq l\leq N-1.$$
Then, since the largest dimensional stratum is $\phi_{N-1},$ $\operatorname{dim}_{\mathbf{R}}F_N=\operatorname{dim}_{\mathbf{R}}\phi_{N-1}\leq 2v_{d,n}-N-1,$ so the Borel-Moore homology of $F_N$ must vanish in degree $k\geq 2v_{d,n}-N.$
As a consequence, when considering the whole spectral sequence, we have that, for $k\geq 2v_{d,n}-N,$ the Borel-Moore homology of $\Sigma_{d,n}$ is defined only by the strata $F_1,\dots F_4.$ By Alexander duality, this means that the cohomology of $X_{d,n}$ is given by that of the strata $F_1,\dots F_4,$ for $k< N\leq\frac{d-3n+1}{2}.$ 
\begin{rk}\label{generalization}
	Let us consider again the space of sections in $\mathcal{O}_{\mathbb{F}_n}(hE+dF)$ with $h\geq3$ and repeat the whole argument in this section in order to compute the Borel-Moore homology of the subset of singular sections in $H^0(\mathbb{F}_n;\mathcal{O}_{\mathbb{F}_n}(hE+dF)).$ The only differences that we may get from the $h=3$ case, when studying the Vassiliev spectral sequence, are determined by configuration spaces having at least $3$ points on the same line of the ruling. 
	However these configurations have all trivial twisted Borel-Moore homology by Lemma \ref{VAS}. Hence the first columns of the corresponding Vassiliev spectral sequence are exactly the same ones as in the spectral sequence converging to the Borel-Moore homology of $\Sigma_{d,n},$ in Table \ref{table1}. 
	Thus, the only difference is given by the stable range, which will now depend on $h$ and can be deduced from Remark \ref{rkEW}. The stable range in cohomology is indeed $i\leq\left\lfloor\frac{g+(3-2h)n+2}{4}\right\rfloor.$
	This agrees with \cite[Theorem 9.5]{EW15}, in which Erman and Wood proved that the probability of a curve of bidegree $(h,d)$ on a fixed Hirzebruch surface to be smooth is independent of $h$ for $h\geq3$ and $d\rightarrow \infty.$
\end{rk}

\section{Group action on $\mathbb{F}_n$}
In this section we compute the rational cohomology of each stratum of the Maroni stratification, by considering the action of the automorphism group of $\mathbb{F}_n$ for any $n\geq0,$ thus proving Proposition 1.1.

Let $G_n$ denote the automorphism group of $\mathbb{F}_n.$ \sloppy
When  $n\geq1,$ $G_n\cong Aut(\mathbf{P}(1,1,n))$, which is the group of automorphisms of the weighted graded ring $\mathbf{C}\left[x,y,z\right]$ with $\operatorname{deg}x=\operatorname{deg}y=1$ and $\operatorname{deg}{z}=n$, fixing the singular point $\left[0,0,1\right]$. Such automorphisms are of the form
	\begin{equation*}
	\begin{cases}
	x\mapsto a_1x+a_2y\\
	y\mapsto b_1x+b_2y\\
	z\mapsto cz+ q(x,y),
	\end{cases}
	\end{equation*} 
	where $a_1,a_2,b_1,b_2,c\in\mathbf{C}$ are such that $c(a_1b_2-a_2b_1)\neq0$ and $q\in\mathbf{C}\left[x,y\right]_n.$ Note that $\mathbf{C}\left[x,y\right]_n\cong\mathbf{C}^{n+1}$ is contractible, therefore $G_n$ is homotopy equivalent to the reductive group $\mathbf{C}^*\times GL_2(\mathbf{C}).$
	
Let us consider the quotient stack $\left[X_{d,n}/(\mathbf{C}^*\times GL_2(\mathbf{C}))\right]$ and denote by $X_{d,n}/(\mathbf{C}^*\times GL_2(\mathbf{C}))$ its underlying coarse moduli space.
By abuse of notation, since a quotient stack and its underlying coarse moduli space
have the same rational cohomology, we will drop the brackets.

The moduli space $X_{d,n}/(\mathbf{C}^*\times GL_2(\mathbf{C}))$ is the space of isomorphism classes of triples $(C,L,H)$ where $C$ is a trigonal curve of genus $g=2d-3n-2,$ $L$ is the linear system defining its trigonal structure and $H$ is a hyperplane section.
Thus $X_{d,n}/(\mathbf{C}^*\times GL_2(\mathbf{C}))$ is an orbifold $\mathbf{C}^{n+1}\mbox{-}$bundle over 
$N_n=\{\left[C\right]\in\mathcal{T}_g|\, C\text{ has Maroni invariant } n\}$ and we can deduce the stable rational cohomology of the latter from that of $X_{d,n}/(\mathbf{C}^*\times GL_2(\mathbf{C})).$\\

Before giving a proof of Proposition 1.1, let us observe that, when $n\geq 1,$ a generalized version of the Leray-Hirsch theorem can be applied to $H^{\bullet}(X_{d,n};\mathbf{Q})$ in order to recover the rational cohomology of  $X_{d,n}/ GL_2(\mathbf{C})$ from that of $X_{d,n}$ and of $GL_2(\mathbf{C})$.
\begin{prop}\label{divisibility}
For $n\geq 1,$ there is an isomorphism of graded $\mathbf{Q}\mbox{-}$vector spaces with mixed Hodge structures 
$$H^{\bullet}(X_{d,n}/ GL_2(\mathbf{C});\mathbf{Q})\otimes H^{\bullet}(GL_2(\mathbf{C});\mathbf{Q})\cong H^{\bullet}(X_{d,n};\mathbf{Q}).$$
\end{prop}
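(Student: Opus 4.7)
The plan is to identify $\pi: X_{d,n} \to X_{d,n}/GL_2(\mathbf{C})$ as a principal $GL_2(\mathbf{C})$-bundle (up to finite stabilizers, which are rationally invisible) and to prove that the associated Leray--Serre spectral sequence
$$E_2^{p,q} = H^p\left(X_{d,n}/GL_2(\mathbf{C}); \mathbf{Q}\right) \otimes H^q\left(GL_2(\mathbf{C}); \mathbf{Q}\right) \Longrightarrow H^{p+q}\left(X_{d,n}; \mathbf{Q}\right)$$
degenerates at $E_2$. The local system is constant because $GL_2(\mathbf{C})$ is connected and the base is path-connected. Since $GL_2(\mathbf{C})$ deformation retracts onto $U(2) \simeq S^1 \times S^3$, one has $H^{\bullet}(GL_2(\mathbf{C}); \mathbf{Q}) = \Lambda(\alpha_1, \alpha_3)$ with $\alpha_1$ in cohomological degree $1$ and Hodge weight $2$, and $\alpha_3$ in cohomological degree $3$ and Hodge weight $4$. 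Degeneration would immediately yield the desired graded $\mathbf{Q}$-vector space isomorphism; compatibility with the mixed Hodge structures is automatic because $\pi$ is algebraic and the Leray--Serre filtration is then one by sub-MHS.

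By the Leray--Hirsch criterion, degeneration follows once one exhibits classes $\widetilde{\alpha}_1 \in H^1(X_{d,n}; \mathbf{Q})$ and $\widetilde{\alpha}_3 \in H^3(X_{d,n}; \mathbf{Q})$ whose restriction to any $GL_2(\mathbf{C})$-orbit in $X_{d,n}$ generates $H^{\bullet}(GL_2(\mathbf{C});\mathbf{Q})$ as an exterior algebra. For $\widetilde{\alpha}_1$, I would use the logarithmic differential $d\log\Delta$ of a $GL_2(\mathbf{C})$-semi-invariant polynomial $\Delta \in \mathbf{C}\left[V_{d,n}\right]$ cutting out the discriminant $\Sigma_{d,n}$ (for example a suitable power of the resultant defining $\Sigma_{d,n}$). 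Since $\Delta$ is nowhere zero on $X_{d,n}$ and transforms under $GL_2(\mathbf{C})$ by a nontrivial power of the determinant character, the restriction of $d\log\Delta$ to any orbit $GL_2(\mathbf{C}) \cdot v_0 \cong GL_2(\mathbf{C})$ equals a nonzero scalar multiple of $d\log\det$, which is precisely a generator of $H^1(GL_2(\mathbf{C});\mathbf{Q})$.

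The main obstacle is producing $\widetilde{\alpha}_3$ with the analogous restriction property. My preferred route is to construct it as the pullback of the volume generator of $H^3(SL_2(\mathbf{C}); \mathbf{Q}) \cong H^3(S^3;\mathbf{Q})$ through a $GL_2(\mathbf{C})$-equivariant map from $X_{d,n}$ into a space (e.g.\ a configuration space of branch points of the trigonal cover, or the total space of an associated rank-two bundle) on which the $SL_2(\mathbf{C})$-action has the homotopy type of the defining action on $S^3$; alternatively, one may try to realize $\widetilde{\alpha}_3$ as a secondary (Chern--Simons-type) class attached to a natural $GL_2(\mathbf{C})$-equivariant rank-two bundle trivialized along each fiber. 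If a direct construction proves elusive, a fallback is a weight-theoretic argument: the differentials $d_r$ in the spectral sequence are strict morphisms of mixed Hodge structures, and one can try to show that the Tate twists of the only possible images of $\alpha_1$ and $\alpha_3$ are incompatible with the weights carried by $H^2$ and $H^4$ of $X_{d,n}/GL_2(\mathbf{C})$, forcing the transgressions to vanish and hence the spectral sequence to collapse.
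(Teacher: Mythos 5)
Your overall strategy is the same as the paper's: both reduce the statement to a Leray--Hirsch criterion, namely the surjectivity of the restriction map $H^{\bullet}(X_{d,n};\mathbf{Q})\to H^{\bullet}(GL_2(\mathbf{C});\mathbf{Q})$ along an orbit $GL_2(\mathbf{C})\cdot f\hookrightarrow X_{d,n}$. Your degree-one class is essentially correct: $d\log\Delta$ for a semi-invariant equation $\Delta$ of $\Sigma_{d,n}$ restricts to a multiple of $d\log\det$ on each orbit, and the nontriviality of the character (which you assert but do not check) amounts to the observation that degenerate matrices carry a smooth $f$ into the discriminant. This is exactly the dual of the paper's statement that $[D]$ pulls back to a nonzero multiple of $[\Sigma_{d,n}]$.

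The genuine gap is in degree three, which is the crux of the proposition. You do not construct $\widetilde{\alpha}_3$; you list candidate constructions (an equivariant map to a configuration space of branch points, a Chern--Simons-type secondary class) without carrying any of them out, and it is not clear that any of them works --- for instance, the $GL_2$-action on the branch divisor factors through $PGL_2$ acting on $\mathbf{P}^1$, so one would still have to identify a target whose $H^3$ restricts correctly to an orbit. Your weight-theoretic fallback is circular: to rule out the transgressions $d_3\colon E_3^{0,3}\to E_3^{3,1}$ and $d_4\colon E_4^{0,3}\to E_4^{4,0}$ by weights you would need to know the weights on $H^3$ and $H^4$ of $X_{d,n}/GL_2(\mathbf{C})$, which is precisely the ring the proposition is meant to compute. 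The paper closes this gap concretely: by Alexander duality the restriction map in degree $3$ becomes a map $\bar{H}_{2v_{d,n}-4}(\Sigma_{d,n};\mathbf{Q})\to\bar{H}_{4}(D;\mathbf{Q})$, where $D$ is the determinant hypersurface in the space of $2\times 2$ matrices, and one checks surjectivity on fundamental classes: the cycle $R\subset D$ of matrices with vanishing first column sends $f$ to $C(c_1,c_2)\,y^{d-3n}g(y,z)$, a curve containing the ruling line $y=0$ with multiplicity $d-3n$, so its image cycle is a nontrivial linear combination of the two generators $[\Sigma_{d,n}^1],[\Sigma_{d,n}^2]$ of $\bar{H}_{2v_{d,n}-4}(\Sigma_{d,n};\mathbf{Q})$ supplied by the Vassiliev spectral sequence. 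Some such explicit verification (or an actual construction of $\widetilde{\alpha}_3$) is what your argument is missing.
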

\begin{proof}
By \cite[Thm. 3]{PS} it is sufficient to prove the surjectivity of the map on cohomology
\[
\resizebox{\columnwidth}{!}{
$
	\rho^*:\bar{H}_{2v_{d,n}-i-1}(\Sigma_{d,n};\mathbf{Q})\cong H^i(X_{d,n};\mathbf{Q})\rightarrow H^i(GL_2(\mathbf{C});\mathbf{Q})\cong \bar{H}_{2\operatorname{dim}M-i-1}(D;\mathbf{Q}),
$}
\]
induced by the orbit map $\rho: GL_2(\mathbf{C})\rightarrow X_{d,n},$ where $M$ is the space of $2\times2$ matrices and $D$ the complement of $GL_2(\mathbf{C})$ in $M.$\\
The generators of $\bar{H}_\bullet(D;\mathbf{Q})$ are $\left[D\right]$ in degree $6$ and $\left[R\right]$ in degree $4,$ where $R\subset D$ is the subvariety of matrices with zeros in the first column. From \cite[Section 3.1]{Zhe} we already know that, for $n=1,$ $\rho^*$ is surjective: the preimages of the generators $\left[D\right]$ and $\left[R\right]$ are a non-zero multiple of the class $\left[\Sigma_{d,n}\right]\in\bar{H}_{2v_{d,1}-2}(\Sigma_{d,1};\mathbf{Q})$ and a non-trivial linear combination of the classes $\left[\Sigma_{d,n}^1\right], \left[\Sigma_{d,n}^2\right]\in\bar{H}_{2v_{d,1}-4}(\Sigma_{d,1};\mathbf{Q}),$ respectively, where $\Sigma_{d,n}^1$ is the subspace of polynomials in $V_{d,n}$ which are singular at a point on $E_n$ and $\Sigma_{d,n}^2$ is the subspace of polynomials in $V_{d,n}$ which are singular at a point on a line of a ruling. From Table 1, we observe that the class in degree $2v_{d,n}-2$ and the two classes in degree $2v_{d,n}-4$ appear in each Vassiliev's spectral sequence, with $n\geq 1,$ therefore $\rho^*$ must be surjective for any $n\geq1.$ In fact, by recalling that elements of $V_{d,n}$ are polynomials of the form \eqref{poly}, if we consider the extension of the orbit map $D\rightarrow\Sigma_{d,n},$ $f\in X_{d,n}$ and the subvariety $R$, the latter acts by
$$\begin{pmatrix}0&c_1\\0&c_2\end{pmatrix}\cdot f(x,y,z)=C(c_1,c_2)y^{d-3n}g(y,z),$$ where $C(c_1,c_2)\in\mathbf{C}$ and $g$ is a weighted polynomial in $\mathbf{C}\left[y,z\right]_{3n},$ with $\operatorname{deg}y=1,$ $\operatorname{deg}z=n.$ Thus, elements in $R$ are sent to polynomials defining curves which are union of the line of the ruling of equation $y=0,$ with multiplicity $d-3n,$ and some other curve of lower degree passing through a point of this line of the ruling. Similarly elements in $D$ will be sent to polynomials defining curves that are union of some line of the ruling, with multiplicity $d-3n,$ and another curve meeting this line of the ruling at some point. Therefore the preimages of $\left[D\right]$ and $\left[R\right]$ through $\rho^*$ must be a non-zero multiple of $\left[\Sigma_{d,n}\right]$ and a non-trivial linear combination of  $\left[\Sigma_{d,n}^1\right], \left[\Sigma_{d,n}^2\right]$, respectively, as predicted. 
\end{proof}

With the above result, we are now able to compute the rational cohomology of $N_n,$ in degree $i\leq\lfloor\frac{d-3n}{2}\rfloor.$
\begin{proof}[Proof of Proposition 1.1]
Consider first the case $n\geq 1.$ By applying Proposition \ref{divisibility}, the rational cohomology of the quotient $X_{d,n}/GL_2(\mathbf{C})$, in degree $i\leq\left\lfloor\frac{d-3n}{2}\right\rfloor$, will be 
\begin{equation}\label{GL}
H^i(X_{d,n}/GL_2(\mathbf{C});\mathbf{Q})=\begin{cases}
\mathbf{Q}, & i=0;\\
\mathbf{Q}(-2), & i=3;\\
\mathbf{Q}(-3), & i=5;\\
\mathbf{Q}(-5), & i=8;\\
0, & \text{otherwise}.
\end{cases}
\end{equation}
Consider then the spectral sequence associated to the bundle
$$X_{d,n}/GL_2(\mathbf{C})\xrightarrow{\mathbf{C}^{*}}X_{d,n}/(\mathbf{C^*}\times GL_2(\mathbf{C})),$$
which will look either as

\begin{table}[H]\caption{Spectral sequence converging to $H^\bullet(X_{d,n}/GL_2(\mathbf{C});\mathbf{Q}).$}\centering\label{tableBundle}	
		\begin{tabular}{ c|cccccccccc} 
			$1$&$\mathbf{Q}(-1)$\tikzmark{w} &&$\mathbf{Q}(-2)$&&&$\mathbf{Q}(-4)$\tikzmark{v}&&$\mathbf{Q}(-5)$&&\\
			$0$&$\mathbf{Q}$ &&\tikzmark{ww}$\mathbf{Q}(-1)$&&&$\mathbf{Q}(-3)$&&\tikzmark{vv}$\mathbf{Q}(-4)$&&\\
			\hline
			&$0$&$1$&$2$&$3$&$4$&$5$&$6$&$7$&$8$&$\dots$
		\end{tabular}
		\begin{tikzpicture}[overlay, remember picture, yshift=.25\baselineskip, shorten >=.5pt, shorten <=.5pt]
		\draw [shorten >=.1cm,shorten <=.1cm,->]([yshift=5pt]{pic cs:w}) -- ([yshift=5pt]{pic cs:ww});
		\draw [shorten >=.1cm,shorten <=.1cm,->]([yshift=5pt]{pic cs:v}) -- ([yshift=5pt]{pic cs:vv});
		\end{tikzpicture}
\end{table}
\noindent or as
\begin{table}[H]\caption{Alternative spectral sequence converging to $H^\bullet(X_{d,n}/GL_2(\mathbf{C});\mathbf{Q}).$}\centering\label{alternative}
	\begin{tabular}{ c|cccccccccc} 
		$1$&$\mathbf{Q}(-1)$\tikzmark{1a} &&$\mathbf{Q}(-2)$\tikzmark{4a}&$ \mathbf{Q}(-3) $\tikzmark{3a}&$\mathbf{Q}(-3)$&$\mathbf{Q}(-4)$\tikzmark{2a}&&$\mathbf{Q}(-5)$&&\\
		$0$&$\mathbf{Q}$ &&\tikzmark{11a}$\mathbf{Q}(-1)$&$\mathbf{Q}(-2)$&\tikzmark{44a}$\mathbf{Q}(-2)$&\tikzmark{33a}$\mathbf{Q}(-3)$&&\tikzmark{22a}$\mathbf{Q}(-4)$&&\\
		\hline
		&$0$&$1$&$2$&$3$&$4$&$5$&$6$&$7$&$8$&$\dots$
	\end{tabular}
	\begin{tikzpicture}[overlay, remember picture, yshift=.25\baselineskip, shorten >=.5pt, shorten <=.5pt]
		\draw [shorten >=.1cm,shorten <=.1cm,->]([yshift=5pt]{pic cs:1a}) -- ([yshift=5pt]{pic cs:11a});
		\draw [shorten >=.1cm,shorten <=.1cm,->]([yshift=5pt]{pic cs:2a}) -- ([yshift=5pt]{pic cs:22a});
		\draw [shorten >=.1cm,shorten <=.1cm,->]([yshift=5pt]{pic cs:3a}) -- ([yshift=5pt]{pic cs:33a});
		\draw [shorten >=.1cm,shorten <=.1cm,->]([yshift=5pt]{pic cs:4a}) -- ([yshift=5pt]{pic cs:44a});
	\end{tikzpicture}
\end{table}

\noindent
where in both spectral sequences the differentials must be non-trivial because of \eqref{GL}. \\
Both spectral sequences also imply that $H^2(N_n;\mathbf{Q})$ is generated by the Euler class $\xi$ of the $\mathbf{C}^*\mbox{-}$bundle, which is a non-zero multiple of $\kappa_1,$ by \cite{PV15}.
Moreover, the spectral sequence represented in Table \ref{alternative} would imply that $H^4(N_n;\mathbf{Q})$ is generated by $\kappa_1\cdot\xi=\alpha\kappa_1^2\neq 0.$

Hence, the choice of the spectral sequence corresponds to verify if $\kappa_1^2=0$ holds in $H^4(N_n;\mathbf{Q})$ and is due to the following lemma.

\begin{lem}
The relation $\kappa_1^2=0$ holds in $H^4(N_n;\mathbf{Q}).$
\end{lem}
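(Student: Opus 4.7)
The plan is to prove $\kappa_1^2=0$ in the rational Chow ring $A^2(N_n)_{\mathbf{Q}}$ and then transfer the vanishing to $H^4(N_n;\mathbf{Q})$ via the cycle class map, which is well-defined for the Deligne--Mumford stack $N_n$. This reduces a topological question to an algebraic one where the equivariant Chow machinery of Edidin--Graham applies directly.

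First I would realize $N_n$ (up to the contractible $\mathbf{C}^{n+1}$-fibration coming from the unipotent part of $\operatorname{Aut}(\mathbb{F}_n)$) as the quotient stack $[X_{d,n}/(\mathbf{C}^* \times \mathbf{C}^* \times GL_2(\mathbf{C}))]$, where the first $\mathbf{C}^*$ rescales polynomials and the remaining factors are a deformation retract of $\operatorname{Aut}(\mathbb{F}_n)$. The equivariant Chow ring of a point is then $\mathbf{Q}[t,s,c_1,c_2]$, with $t,s$ of degree $1$ and $c_i$ of degree $i$. From the discussion following Proposition \ref{divisibility}, the kappa class corresponds, up to a nonzero scalar, to the Euler class $\xi$ of the rescaling $\mathbf{C}^*$-bundle, i.e.\ to $t$.

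I would then extract relations in low codimension from the excision sequence associated to the closed embedding $\Sigma_{d,n}\hookrightarrow V_{d,n}$. In codimension $1$ the single relation is the equivariant class $[\Sigma_{d,n}]$, which is a linear form in $t,s,c_1$. In codimension $2$ further relations arise from equivariant classes of codimension-$2$ subvarieties of $\Sigma_{d,n}$, for instance the closure of the locus of sections containing $E_n$ as a component, or of sections singular at a distinguished point of $E_n$; these are natural to describe as zero loci of equivariant bundle maps and therefore have equivariant classes computable by Porteous-type formulas. The claim I would aim to establish is that one such relation, combined with the codimension-$1$ relation, forces $t^2$, and hence $\kappa_1^2$, to vanish in $A^2(N_n)_{\mathbf{Q}}$.

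The main obstacle is to single out the correct codimension-$2$ component of $\Sigma_{d,n}$ and to compute its equivariant class in $\mathbf{Q}[t,s,c_1,c_2]_2$: this is exactly the kind of equivariant Schubert-type calculation where Canning's work on the Chow ring of the Hurwitz stack $\mathscr{H}_{3,g}$ provides the right tools. Once the relation is in hand, solving for $t^2$ modulo the codimension-$1$ relation is a routine linear algebra step, and applying the cycle class map delivers the cohomological statement.
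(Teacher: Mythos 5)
Your overall strategy --- reduce the statement to the vanishing of $\kappa_1^2$ in $A^2(N_n)_{\mathbf{Q}}$ via equivariant intersection theory and then push it into cohomology with the cycle class map --- is the same as the paper's. The problem is that the proposal stops exactly where the content of the lemma begins: the step you yourself label ``the main obstacle,'' namely identifying the codimension-$2$ loci, computing their equivariant classes, and checking that the resulting relations kill $\kappa_1^2$, \emph{is} the proof, and it is not carried out. In the paper this is resolved by adapting Canning--Larson's presentation of the trigonal Hurwitz stack via the Tschirnhausen bundle: for a fixed splitting type the relevant base stack is $\mathcal{B}\mathcal{G}\times\mathcal{B}SL_2$ with $\mathcal{G}\simeq\mathbf{C}^*\times\mathbf{C}^*$, so $A^*(N_n)$ is a quotient of $\mathbf{Q}\left[n_1,m_1,c_2\right]$, and running their explicit computation (their code, adapted to this base) yields one linear relation between $n_1$ and $m_1$ in codimension $1$ and \emph{two} independent relations in codimension $2$, which together give $A^2(N_n)=0$. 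Note that two codimension-$2$ relations are genuinely needed: after imposing the codimension-$1$ relation, $A^2$ is still spanned by two classes (say $n_1^2$ and $c_2$), so your hope that ``one such relation'' combined with the codimension-$1$ one forces $\kappa_1^2=0$ is not justified a priori.

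Two further points about your setup. First, the extra rescaling $\mathbf{C}^*$ is redundant: the scalar action on $V_{d,n}$ is already realized inside $\mathbf{C}^*\times GL_2(\mathbf{C})$ (combine $x,y\mapsto\lambda x,\lambda y$ with $z\mapsto\lambda^n z$ to scale $f$ by $\lambda^d$), so adjoining it as a separate factor produces a $\mathbf{C}^*$-gerbe and a spurious free degree-one generator in the equivariant Chow ring; moreover $\kappa_1$ is a nonzero multiple of the Euler class of the $\mathbf{C}^*$ sitting in the reductive part of $\operatorname{Aut}(\mathbb{F}_n)$, not of a rescaling torus, so the identification of $\kappa_1$ with your class $t$ is not correct as stated. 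Second, the codimension-$2$ relations produced by excision are the image of all of $A^1_G(\Sigma_{d,n})$; selecting a few natural loci (sections containing $E_n$, sections singular along $E_n$, etc.) is legitimate when the goal is only to exhibit \emph{enough} relations, but then the burden is precisely to compute their classes in $\mathbf{Q}\left[t,s,c_1,c_2\right]_2$ and do the linear algebra, which the proposal defers to ``Canning's tools'' without executing. As written, the argument establishes nothing beyond the reduction to Chow groups.
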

The proof of the above result is due to Samir Canning. We refer to the end of the section for the details.

The spectral sequence associated to the orbifold $\mathbf{C}^*\mbox{-}$bundle  $X_{d,n}/GL_2\rightarrow X_{d,n}/(\mathbf{C^*}\times GL_2)$ is thus the one represented in Table \ref{tableBundle} and, for $n\geq 1,$ the rational cohomology of $N_n$ is the one described in \eqref{stratum}. Here, notice that the stable range $i\leq\left\lfloor\frac{d-3n}{2}\right\rfloor$ is the same obtained for $H^i(X_{d,n}/GL_2(\mathbf{C});\mathbf{Q}).$ Indeed, if we had a non-trivial class in $E_2^{\left\lfloor\frac{d-3n}{2}\right\rfloor,0}$ in Table 2, then additional non-trivial classes would also appear in $H^i(X_{d,n}/GL_2(\mathbf{C});\mathbf{Q})$, for $i\leq\left\lfloor\frac{d-3n}{2}\right\rfloor.$

Finally, when $n=0,$ the group $G_0$ acting on $\mathbb{F}_0$ is different from those we have considered when $n\geq1$. However, also in this case a generalized version of Leray-Hirsch theorem can be applied. From \cite[Section 3.1]{Tom05}, $G_0$ is a reductive group which is isogenous to $\mathbf{C}^*\times SL_2(\mathbf{C})\times SL_2(\mathbf{C}),$ whose cohomology is known.

Thus, the rational cohomology of $X_{d,0}/G_0$ has already been computed in \cite[Section 3.7]{Tom05} and by applying the generalized version of Leray-Hirsch theorem we get \eqref{stratum0}.
\sloppy
\end{proof}
What is left to do is to show that the spectral sequence associated to the orbifold $\mathbf{C}^*\mbox{-}$bundle is indeed the one represented in Table 2.
\begin{proof}[Proof of Lemma 4.2]
The proof of this Lemma heavily relies on the relations in the rational Chow ring of $\mathcal{T}_g$ computed in \cite[Section 4]{CL}.

The same constructions and machinery that Canning and Larson produced to find relations on the Chow ring of $\mathcal{T}_g$ can be adapted to our setting to find relations in $A^*(N_n).$

More precisely, given a trigonal curve $C$ of Maroni invariant $n$ and genus $g$ with associated degree $3$ map $\alpha:C\rightarrow\mathbf{P}^1,$ we can associate it to a rank 2, degree $g+3$ vector bundle $\mathcal{E}$, denoted as the dual of the Tschirnhausen module, \cite{Mir}. 

In fact, we know that $C$ canonically embeds in $\mathbb{F}_n:=\mathbf{P}(\mathcal{O}_{\mathbf{P}^1}\oplus \mathcal{O}_{\mathbf{P}^1}(n))$ that is isomorphic to $\mathbf{P}(\mathcal{E})$ where $\mathcal{E}:=\mathcal{O}_{\mathbf{P}^1}(a)\oplus \mathcal{O}_{\mathbf{P}^1}(b))$ is said to be of \emph{splitting type} $(a,b)$ with $a, b\in\mathbf{Z}$ such that $a\leq b,$ $b-a=n$ and $a+b=g+2$. 

This defines a map $\hat{\alpha}: \mathcal{T}_g\rightarrow \mathcal{B}_{3,g}$, where $\mathcal{B}_{3,g}$ is the moduli stack of rank 2, degree $g+3$ globally generated vector bundles on $\mathbf{P}^1\mbox{-}$bundles.

Following \cite[Section 5]{BV12} and \cite[Section 4]{CL2}, the locally closed substack in $\mathcal{B}_{3,g}$ corresponding to a fixed splitting type $(a,b)$ is isomorphic to the classifying stack $\mathcal{B}\mathcal{G}\times \mathcal{B}SL_2$ with $\mathcal{G}:=Aut(\mathcal{E}),$ which is homotopy equivalent to $\mathbf{C}^*\times\mathbf{C}^*.$

Thus, in order to get relations on $A^*(N_n),$ it suffices to replace the space denoted by $\mathcal{B}_{3,g}'$ in \cite[Section 4]{CL} with $\mathcal{B}\mathcal{G}\times \mathcal{B}SL_2.$ This proves that $A^*(N_n)$ is a quotient of $\mathbf{Q}\left[n_1,m_1,c_2\right]$ where $n_1$ and $m_1$ are respectively the first Chern classes of the line bundles $\mathcal{N}$ and $\mathcal{M},$ associated to each factor $\mathbf{C}^*$ in $\mathcal{G}$ and $c_2$ is the pullback of the universal second Chern class on $\mathcal{B}SL_2.$

In particular we obtain that $A^*(N_n)\cong\mathbf{Q}\left[n_1,m_1,c_2\right]/I,$ for the ideal 
\begin{footnotesize}
\begin{align*} 
	I=&((-9b+8g+12)n_1+(9b-g-6)m_1,\\ &4n_1^2-n_1m_1+4m_1^2+(-9b^2+9bg-4g^2	+18b-12g-8)c_2,\\
	&(-12b+12g+20)n_1^2 
	-2n_1m_1+(12b-4)m_1^2+\\
	&(-12b^2
	g+12bg^2-4g^3-18b^2
	+42bg-20g^2+36b-32g-16)c_2,\\
	&4n_1^3
	+4m_1^3+(-12b^2+24bg-12g^2
	+42b-40g-32)n_1c_2+(-12b^2
	+6b+2g+4)m_1c_2),
\end{align*}\end{footnotesize}
which can be computed using the code written by Canning and Larson, which is provided at \cite{code}, adapted to our case.

The output of the computation shows a relation between $n_1$ and $m_1$ in codimension 1 and two linear independent relations between $n_1^2$ and $c_2$ in codimension 2, thus proving that $A^2(N_n)=0.$
\end{proof}

\section{Maroni stratification}
Recall that $\mathcal{T}_g$ has a natural stratification by the Maroni invariant:
	\begin{equation}\label{Mar}\begin{cases}
			\mathcal{N}_s\subset\dots\subset\mathcal{N}_{0}=\mathcal{T}_g,&\text{if $g$ is even},\\
			\mathcal{N}_s\subset\dots\subset\mathcal{N}_{1}=\mathcal{T}_g,&\text{if $g$ is odd},\\
		\end{cases}
	\end{equation}\sloppy
	where $s$ is the largest index with the same parity as $g$ satisfying $s\leq \left\lfloor\frac{g+2}{3}\right\rfloor$
	and for all $0\leq n\leq s,$ $g\equiv n(\operatorname{mod}2)$ we denote by $\mathcal{N}_n$ the closed subscheme
	\begin{equation*}
		\mathcal{N}_n:=\{\left[C\right]\in\mathcal{T}_g|\, C\text{ has Maroni invariant }\geq n\}\subseteq \mathcal{T}_g.
\end{equation*}
Notice that $N_n=\mathcal{N}_n\backslash\mathcal{N}_{n+2},$ so we have indeed computed the cohomology of the strata in the Maroni stratification of $\mathcal{T}_g,$ within a certain range.\\

In order to deduce the cohomology of $\mathcal{T}_g$ from that of the strata, we consider the spectral sequence associated to this stratification. Recall that 
$$
\operatorname{dim}\mathcal{N}_n=2g+2-n-\delta_{0,n},
$$
then each $\mathcal{N}_{n+2}$ has codimension $2$ in $\mathcal{N}_n$ with the sole exception of $\mathcal{N}_2\subset\mathcal{T}_g,$ which is a divisor for $g$ even. Moreover, observe from Proposition 1.1 that $N_0$ is the only stratum having different cohomology from the other strata. Consequently, we will need to distinguish the cases for $g$ even and odd.

\subsection{Case $g$ even}
Suppose first that $g$ is even. We can recover the rational cohomology of $\mathcal{T}_g$, in a certain range, from the Gysin spectral sequence in Borel-Moore homology induced by the Maroni stratification \eqref{Mar}. 

Precisely, the $E^1\mbox{-}$ page of the spectral sequence is obtained by considering in each column the Borel-Moore homology of each strata $N_n$ in \eqref{Mar}, twisted by $\mathbf{Q}(-\operatorname{codim}_{\mathcal{T}_g}N_n).$
We will also twist the whole spectral sequence by $\mathbf{Q}(-\operatorname{dim}\mathcal{T}_g)$ in order to get the fundamental class of $\mathcal{T}_g$ in degree 0. The spectral sequence is represented in Table 4.

	\begin{table}[H]\caption{Spectral sequence converging to $\bar{H}_\bullet(\mathcal{T}_g;\mathbf{Q})\otimes\mathbf{Q}(-\operatorname{dim}\mathcal{T}_g)$ with $g$ even.}\small\label{ss1}\centering
		\begin{tabular}{ ccccccc|c} 
		 & & &$N_6$ &$N_4$&$N_2$&$N_0$&\\
			 &$\dots$& $-4$& $-3$&$-2$&$-1$&$0$&\\
			\hline
			 & & & &&&$\mathbf{Q}$&$0$\\
			 & & & &&$\mathbf{Q}(-1)$&&$-1$\\
			 && & & &&&$-2$\\
			 && & & &$\mathbf{Q}(-2)$&& $-3$\\
			 && & & $\mathbf{Q}(-3)$\tikzmark{11}&&&$-4$\\
			 && &&&&\tikzmark{1}$\mathbf{Q}(-3)$&$-5$\\
			 &&&&$\mathbf{Q}(-4)$\tikzmark{22}&\tikzmark{2}$\mathbf{Q}(-4)$ & &$-6$ \\
			&& &$\mathbf{Q}(-5)$ \tikzmark{33}&& & & $-7$\\
			 && & & &\tikzmark{3}$\mathbf{Q}(-5)$&&$-8$\\
			 && &$\mathbf{Q}(-6)$\tikzmark{44} &\tikzmark{4}$\mathbf{Q}(-6)$ &&&$-9$\\
			 &&$\mathbf{Q}(-7)$\tikzmark{55} & & &&&$-10$\\
			&& & &\tikzmark{5}$\mathbf{Q}(-7)$ &&&$-11$\\
			 &&$\dots$ &$\mathbf{Q}(-8)$ & &&&$-12$\\
			 &&$\dots$ &$\mathbf{Q}(-9)$ & &&&$-13$\\
		\end{tabular}
		\begin{tikzpicture}[overlay, remember picture, yshift=.25\baselineskip, shorten >=.5pt, shorten <=.5pt]
		\draw [shorten <=.1cm,->]([yshift=3pt]{pic cs:1}) -- ([yshift=3pt]{pic cs:11});
		\draw [shorten <=.1cm,->]([yshift=3pt]{pic cs:2}) -- ([yshift=3pt]{pic cs:22});
		\draw [shorten <=.1cm,->]([yshift=3pt]{pic cs:3}) -- ([yshift=3pt]{pic cs:33});
		\draw [shorten <=.1cm,->]([yshift=3pt]{pic cs:4}) -- ([yshift=3pt]{pic cs:44});
		\draw [shorten <=.1cm,->]([yshift=3pt]{pic cs:5}) -- ([yshift=3pt]{pic cs:55});
		\end{tikzpicture}
	\end{table}

	Consider the differentials highlighted in Table 4. To check whether they have rank 1 or not, it suffices to study the fundamental classes of each strata $\left[\mathcal{N}_n\right]$ in $\mathcal{T}_g,$ which has been already done in \cite{PV13} and \cite{PV15}.\\
	Precisely, by \cite[Theorem 3.3]{PV13}, any Chow class in $\mathcal{N}_n$ is the restriction of a tautological class on $\mathcal{M}_g$ and its fundamental class is a multiple of $(n-1)\mbox{-}$th power of $\kappa_1,$ by \cite[Proposition 6.2]{PV15}, where $\kappa_1\in R^1(\mathcal{M}_g)\subset H^2(\mathcal{M}_g;\mathbf{Q})$ and $R^*(\mathcal{M}_g)$ denotes the tautological ring of $\mathcal{M}_g.$\\
	Then, the fundamental classes $\left[\mathcal{N}_n\right]$ must vanish for $n\geq 4$ by \cite[Theorem 1.1]{CL}. This means that the differentials $d_{p,q}^2:E_{p,q}^2\rightarrow E_{p-2,q+1}^2$ must be of rank 1.\\
	Because of the ring structure, the second non-trivial cohomology class in each column is also a power of $\kappa_1$ and thus, also the differentials $d_{p,q}^1:E_{p,q}^1\rightarrow E_{p-1,q}^1$ must be of rank 1.\\
	
	Therefore we may conclude that, in degree $i<\frac{g}{4},$
	
	\begin{equation*}
	H^i(\mathcal{T}_g;\mathbf{Q})=\begin{cases}
	\mathbf{Q},&i=0;\\
	\mathbf{Q}(-1)&i=2;\\
	\mathbf{Q}(-2)&i=4;\\
	0&\text{otherwise}.
	\end{cases}
	\end{equation*}

	where the bound $i<\frac{g}{4}$ is obtained by recalling from the previous section that the cohomology of each strata $N_n$ is known in degree lower than $\frac{d-3n+1}{2},$ where $d=\frac{g+3n+2}{2}.$ For any $0\leq n\leq \lfloor{\frac{g+2}{3}}\rfloor,$ we require
	\begin{align*}
	i&<\operatorname{min}\left\{2\operatorname{codim_{\mathcal{T}_g}}N_n+\frac{d-3n+1}{2} ;\,0\leq n\leq \left\lfloor{\frac{g+2}{3}}\right\rfloor\right\}-1\\
	&=\operatorname{min}\left\{2\operatorname{max}\{0,n-1\}+\frac{g+3n+2}{4}+\frac{-3n+1}{2};\,0\leq n\leq \left\lfloor{\frac{g+2}{3}}\right\rfloor \right\}-1\\
	&=\frac{g}{4}.
	\end{align*}

\subsection{Case $g$ odd}
Consider now the odd case. The $E^1\mbox{-}$ page of the Gysin spectral sequence in Borel-Moore homology induced by the Maroni stratification (twisted again by $\mathbf{Q}(-\operatorname{dim}\mathcal{T}_g)$) is represented in Table 5.
\begin{table}[h!]\caption{Spectral sequence converging to $\bar{H}_\bullet(\mathcal{T}_g;\mathbf{Q})\otimes\mathbf{Q}(-\operatorname{dim}\mathcal{T}_g)$ with $g$ odd.}\small\label{ss2}\centering
	\begin{tabular}{ ccccccc|c} 
		& & &$N_7$ &$N_5$&$N_3$&$N_1$&\\
		&$\dots$& $-4$& $-3$&$-2$&$-1$&$0$&\\
		\hline
		& & & &&&$\mathbf{Q}$&$0$\\
		& & & &&&&$-1$\\
		&& & & &&$\mathbf{Q}(-1)$&$-2$\\
		&& & & &$\mathbf{Q}(-2)$&& $-3$\\
		&& & & &&&$-4$\\
		&& &&&$\mathbf{Q}(-3)$\tikzmark{aa}&\tikzmark{a}$\mathbf{Q}(-3)$&$-5$\\
		&&&&$\mathbf{Q}(-4)$\tikzmark{bb}& & &$-6$ \\
		&& & && & \tikzmark{b}$\mathbf{Q}(-4)$ & $-7$\\
		&& & & $\mathbf{Q}(-5)$\tikzmark{cc}&\tikzmark{c}$\mathbf{Q}(-5)$&&$-8$\\
		&& &$\mathbf{Q}(-6)$\tikzmark{dd} & &&&$-9$\\
		&& & & &\tikzmark{d}$\mathbf{Q}(-6)$&&$-10$\\
		&& &$\mathbf{Q}(-7)$\tikzmark{ee} &\tikzmark{e}$\mathbf{Q}(-7)$ &&&$-11$\\
		&&$\mathbf{Q}(-8)$\tikzmark{ff} & & &&&$-12$\\
		&&$\dots$ & &\tikzmark{f}$\mathbf{Q}(-8)$ &&&$-13$\\
		&&$\dots$ &$\dots$ & &&&$-14$\\
	\end{tabular}
	\begin{tikzpicture}[overlay, remember picture, yshift=.25\baselineskip, shorten >=.5pt, shorten <=.5pt]
		\draw [shorten <=.1cm,->]([yshift=3pt]{pic cs:a}) -- ([yshift=3pt]{pic cs:aa});
		\draw [shorten <=.1cm,->]([yshift=3pt]{pic cs:b}) -- ([yshift=3pt]{pic cs:bb});
		\draw [shorten <=.1cm,->]([yshift=3pt]{pic cs:c}) -- ([yshift=3pt]{pic cs:cc});
		\draw [shorten <=.1cm,->]([yshift=3pt]{pic cs:d}) -- ([yshift=3pt]{pic cs:dd});
		\draw [shorten <=.1cm,->]([yshift=3pt]{pic cs:e}) -- ([yshift=3pt]{pic cs:ee});
	\draw [shorten <=.1cm,->]([yshift=3pt]{pic cs:f}) -- ([yshift=3pt]{pic cs:ff});
	\end{tikzpicture}
\end{table}
	
For the same reasons discussed in the even case, the differentials highlighted in Table 5 are all of rank 1. Thus, the stable rational cohomology of $\mathcal{T}_g,$ with $g$ odd, coincides with the one obtained in the even case and precisely, in degree $i<\frac{g-3}{4}$,
	
\begin{equation*}
	H^i(\mathcal{T}_g;\mathbf{Q})=\begin{cases}
	\mathbf{Q},&i=0;\\
	\mathbf{Q}(-1)&i=2;\\
	\mathbf{Q}(-2)&i=4;\\
	0&\text{otherwise},
	\end{cases}
\end{equation*}
where $\frac{g-3}{4}=\operatorname{min}\left\{2(n-1)+\frac{g+3n+2}{4}+\frac{-3n+1}{2};\,1\leq n\leq \left\lfloor{\frac{g+2}{3}}\right\rfloor\right\}-1.$\\
Comparing both results, obtained for $g$ even and odd, 
we get that the rational cohomology $H^{\bullet}(\mathcal{T}_g;\mathbf{Q})$ stabilizes to its rational Chow ring, and equivalently to its tautological ring, for $g$ sufficiently large.\\

Finally, let us conclude by giving the proof of Corollary 1.4.
\begin{proof}[Proof of Corollary 1.4]
	Let us go back to the computation of the cohomology of the strata $N_n,$ 
	which was obtained by considering the quotient space
	$X_{d,n}/G,$
	with $G=\mathbf{C}^*\times GL_2(\mathbf{C})$ for any $n\geq 1$ and $G=\mathbf{C}^*\times SL_2(\mathbf{C})\times SL_2(\mathbf{C})$ for $n=0.$\\
	Then by taking first the projectivization $\mathbf{P}X_{d,n},$ considering then the quotients  $\mathbf{P}X_{d,n}/\mathbf{C}^*$ and $\mathbf{P}X_{d,0}/ SL_2$ would give us the rational cohomology, until a certain degree, of a $SL_2\mbox{-}$cover of $N_n$ that we will denote by $N^\dagger_n,$ for any $n\geq 0$.\\
	Consider first $n=0.$ As we have already noticed in section 4, in this case the generalized version of Leray-Hirsch theorem can be applied to the whole $G=\mathbf{C}^*\times SL_2(\mathbf{C})\times SL_2(\mathbf{C}),$ meaning that the cohomology of $X_{d,0}$ is completely divisible by that of $\mathbf{C}^*\times SL_2(\mathbf{C})\times SL_2(\mathbf{C}).$ Therefore the rational cohomology of $N_0^\dagger$ is simply that of $X_{d,0}$ divided by that of $\mathbf{C}^*\times SL_2(\mathbf{C}):$ 
	\begin{equation*}
		H^i({N}^\dagger_0;\mathbf{Q})=\begin{cases}
			\mathbf{Q},&i=0;\\
			\mathbf{Q}(-2)&i=3;\\
			\mathbf{Q}(-3)&i=5;\\
			\mathbf{Q}(-5)&i=8;\\
			0&\text{otherwise};
		\end{cases}
	\end{equation*}
	in degree $i\leq\left\lfloor\frac{d}{2}\right\rfloor.$ \\
	For $n\geq1,$ we want to consider the quotient of $\mathbf{P}X_{d,n}$ by $\mathbf{C}^*,$ and in this case we saw that the Leray-Hirsch theorem applies only to $GL_2(\mathbf{C}).$ Hence, the cohomology of $N^\dagger_n$ is obtained by multiplying that of $N_n$ by that of $GL_2(\mathbf{C}),$ and then dividing by cohomology of the copy of $\mathbf{C}^*,$ contained in $GL_2(\mathbf{C}):$
	\begin{equation*}
		H^i({N}^\dagger_n;\mathbf{Q})=\begin{cases}
			\mathbf{Q},&i=0;\\
			\mathbf{Q}(-1)&i=2;\\
			\mathbf{Q}(-2)&i=3;\\
			2\mathbf{Q}(-3)&i=5;\\
			\mathbf{Q}(-4)&i=7;\\
			\mathbf{Q}(-5)&i=8;\\
			\mathbf{Q}(-6)&i=10;\\
			0&\text{otherwise}.
		\end{cases}
	\end{equation*}
	in degree $i\leq\left\lfloor\frac{d-3n}{2}\right\rfloor.$
	Now, by looking at the Maroni stratification, all these $N^\dagger_n$ fit together into a moduli space which will be a $SL_2\mbox{-}$cover of $\mathcal{T}_g,$ which we will denote by $\mathcal{T}^\dagger_g,$ and whose cohomology can be deduced by writing the analogues of Tables 4 and 5.\\
	When $g$ is even,
	\begin{table}[H]\caption{Spectral sequence converging to $\bar{H}_\bullet(\mathcal{T}^\dagger_g;\mathbf{Q})\otimes\mathbf{Q}(-\operatorname{dim}\mathcal{T}^\dagger_g)$ with $g$ even.}\small\label{blabla1}\centering
		\begin{tabular}{ ccccccc|c} 
			& & &$N^\dagger _6$ &$N^\dagger_4$&$N^\dagger_2$&$N^\dagger_0$&\\
			&$\dots$& $-4$& $-3$&$-2$&$-1$&$0$&\\
			\hline
			& & & &&&$\mathbf{Q}$&$0$\\
			& & & &&$\mathbf{Q}(-1)$&&$-1$\\
			&& & & &&&$-2$\\
			&& & & &$\mathbf{Q}(-2)$&$\mathbf{Q}(-2)$& $-3$\\
			&& & &$\mathbf{Q}(-3)$ &$\mathbf{Q}(-3)$&&$-4$\\
			&& &&&&$\mathbf{Q}(-3)$&$-5$\\
			&&&&$\mathbf{Q}(-4)$&2$\mathbf{Q}(-4)$ & &$-6$ \\
			&&  &$\mathbf{Q}(-5)$&$\mathbf{Q}(-5)$ &  && $-7$\\
			& & & &&$\mathbf{Q}(-5)$&$\mathbf{Q}(-5)$&$-8$\\
			& & &$\mathbf{Q}(-6)$ &2$\mathbf{Q}(-6)$&$\mathbf{Q}(-6)$&&$-9$\\
			& &$\mathbf{Q}(-7)$ & $\mathbf{Q}(-7)$&&&&$-10$\\
			& && &$\mathbf{Q}(-7)$&$\mathbf{Q}(-7)$&&$-11$\\
			&&$\mathbf{Q}(-8)$ &2$\mathbf{Q}(-8)$ &$\mathbf{Q}(-8)$&&&$-12$\\
			&&$\dots$ &$\dots$ &$\dots$&$\dots$&&$-13$\\	
		\end{tabular}
	\end{table}
\noindent
On the other hand, if $g$ is odd,
\begin{table}[H]\caption{Spectral sequence converging to $\bar{H}_\bullet(\mathcal{T}^\dagger_g;\mathbf{Q})\otimes\mathbf{Q}(-\operatorname{dim}\mathcal{T}^\dagger_g)$ with $g$ odd.}\small\label{blabla}\centering
	\begin{tabular}{ ccccccc|c} 
		& & &$N^\dagger_7$ &$N^\dagger_5$&$N^\dagger_3$&$N^\dagger_1$&\\
		&$\dots$& $-4$& $-3$&$-2$&$-1$&$0$&\\
		\hline
		& & & &&&$\mathbf{Q}$&$0$\\
		& & & &&&&$-1$\\
		&& & & &&$\mathbf{Q}(-1)$&$-2$\\
		&& & & &$\mathbf{Q}(-2)$&$\mathbf{Q}(-2)$& $-3$\\
		&& & & &&&$-4$\\
		&& &&&$\mathbf{Q}(-3)$&2$\mathbf{Q}(-3)$&$-5$\\
		&&&&$\mathbf{Q}(-4)$&$\mathbf{Q}(-4)$ & &$-6$ \\
		&&  && &  &$\mathbf{Q}(-4)$& $-7$\\
		& & & &$\mathbf{Q}(-5)$&2$\mathbf{Q}(-5)$&$\mathbf{Q}(-5)$&$-8$\\
		& & &$\mathbf{Q}(-6)$ &$\mathbf{Q}(-6)$&&&$-9$\\
		& & & &&$\mathbf{Q}(-6)$&$\mathbf{Q}(-6)$&$-10$\\
		& &&$\mathbf{Q}(-7)$ &2$\mathbf{Q}(-7)$&$\mathbf{Q}(-7)$&&$-11$\\
		&&$\mathbf{Q}(-8)$ &$\mathbf{Q}(-8)$ &&&&$-12$\\
		&&$\dots$ &$\dots$ &$\dots$&$\dots$&&$-13$\\
	\end{tabular}
\end{table}
\noindent
In both spectral sequences, all the differentials between the columns, under the third row, must have rank 1 for the same reasons discussed in Tables 4 and 5. Also, the differential $E^1_{0,-3}\rightarrow E^1_{-1,-3}$ must be of rank 1, from \cite[Prop. 6.1]{PV15} and \cite[Vistoli's Theorem]{PV15}. Here in fact, Patel and Vakil proved that the Chow ring of $\mathcal{T}_{g}^\dagger$ is generated by the tautological class $\kappa_1$ and it is related to that of $\mathcal{T}_g$ by 
$$A^\bullet(\mathcal{H}_{3,g}^\dagger)=A^\bullet(\mathcal{T}_g)/(\mu),$$
where $\mu$ is a multiple of $\kappa_1^2.$ 
Therefore, for both $g$ even and odd, we have, in degree $i<\left\lfloor\frac{g}{4}\right\rfloor,$
\begin{equation*}
	H^i(\mathcal{T}^\dagger_g;\mathbf{Q})=\begin{cases}
		\mathbf{Q},&i=0;\\
		\mathbf{Q}(-1)&i=2;\\
		\mathbf{Q}(-3)&i=5;\\
		\mathbf{Q}(-4)&i=7;\\
		0&\text{otherwise}.
	\end{cases}
\end{equation*}
\end{proof}
\bibliographystyle{alpha}
\bibliography{StableRef}

\end{document}